\newcommand{\rank}{\textsf{rank}}
\newcommand{\Jac}{\mathcal{J}}
\newcommand{\Sing}{\mathfrak{S}}
\newcommand{\LD}{\textsc{ld}}
\newcommand{\IDAE}{\sc{idae}}
\newcommand{\VIE}{\sc{vie}}
\newcommand{\VIDE}{\sc{vide}}
\newcommand{\DAE}{\sc{dae}}
\newcommand{\IAE}{\sc{iae}} 
\newcommand{\IPDAE}{\sc{ipdae}}
\newcommand{\PDAE}{\sc{pdae}}
\newcommand{\HVT}{\sc{hvt}}
\newcommand{\DOF}{\sc{dof}}
\newcommand{\AP}{\sc{ap}}
\newcommand{\R}{\mathbb{R}}
\newcommand{\ie}{{i}.{e}.}
\newcommand{\eg}{{e}.{g}. }
\newcommand{\etc}{{etc}.}
\newtheorem{example}{Example}[section]
\newtheorem{remark}{Remark}[section]
\newtheorem{prop}{Proposition}[section]
\newtheorem{cor}{Corollary}[section]
\newtheorem{define}{Definition}[section]
\newtheorem{theorem}{Theorem}[section]
\newtheorem{lemma}{Lemma}[section]
\def\ctotDer{\textbf{D}}
\def\foorp{\hfill$\square$}
\begin{document}

\begin{frontmatter}



\title{Structural Analysis by Modified Signature Matrix for Integro-differential-algebraic Equations}


\author[mymainaddress,mysecondaryaddress]{Wenqiang Yang}


\author[mymainaddress,mysecondaryaddress]{Wenyuan Wu\corref{mycorrespondingauthor}}
\cortext[mycorrespondingauthor]{Corresponding author}
\ead{wuwenyuan@cigit.ac.cn}

\author[mythirdaddress]{Greg Reid}

\address[mymainaddress]{Chongqing Key Laboratory of Automated Reasoning and Cognition, Chongqing Institute of Green and Intelligent Technology, Chinese Academy of Sciences}
\address[mysecondaryaddress]{Chongqing School, University of Chinese Academy of Sciences}
\address[mythirdaddress]{Mathematics Department, University of Western Ontario}

\begin{abstract}

 Integro-differential-algebraic equations ({\IDAE})s are widely used in applications of engineering and analysis. When there are hidden constraints in an {\IDAE}, structural analysis is necessary. But if derivatives of dependent variables appear in their integrals, the existing definition of the signature matrix for an {\IDAE} cannot be satisfied. Moreover, if an {\IDAE} has a singular Jacobian matrix after structural analysis by the $\Sigma$-method, improved structural analysis methods are proposed to regularize it. However, the optimal value of an {\IDAE} may be negative which can not ensure the termination of the regularization. Furthermore, overestimation of the signature matrix may also lead to failure of its structural analysis.

In this paper, firstly, we redefine the signature matrix and introduce a definition of the degree of freedom for {\IDAE}s. Thus, the termination of improved structural analysis methods can be guaranteed. Secondly, the detection method by points is proposed to deal with the problem of overestimation of signature matrix. Thirdly, the embedding method has proved to suitable for structural unamenable {\IDAE}s, including those types that arise from symbolic cancellation and numerical degeneration. 
Finally, the global numerical method is applied to an example of two-stage drive system which can help to find all solutions for {\IDAE}s by witness points. Hopefully, through the example of pendulum curtain, the approach for {\IDAE}s proposed in this paper can be applied to integro-partial-differential-algebraic equations ({\IPDAE})s.
\end{abstract}

\begin{keyword}
integro-differential-algebraic equation \sep signature matrix \sep degree of freedom \sep  structural method \sep witness point
\PACS 0000 \sep 1111
\MSC 0000 \sep 1111
\end{keyword}
\end{frontmatter}


\section{Background}

In applications, {\IDAE}s may occur in the following cases. First of all, {\IDAE}s are often used to analyze dynamic changes during a interval of time or a distance, such as in electric circuits \cite{Jiang1999}, hydraulic circuits \cite{Nass2008}, chemical reactions \cite{Kafarov1999} and one-dimensional heat conduction \cite{Jumarhon1996}. Secondly, {\IDAE}s depend on the selection of dependent variables during modeling. When analyzing the change of current, according to Kirchhoff laws, capacitors correspond to the differential of the change of current, while inductors correspond to the integral of the change of current \cite{Richard2017}. Thirdly, the continuous-time PID controller \cite{Visioli2003} is widely used in control engineering which is an {\IDAE} system with three parameters to be determined.

 Let the independent variable $ t \in \mathbb{I}=[t_0,t_f] \subseteq \mathbb{R}$
and the unknown dependent variable $\bm{x} = (x_1(t),\cdots,x_n(t))$. Suppose
$\bm{x},\bm{x}^{(\bm{1})},\cdots,\bm{x}^{(\bm{\ell})}$ are vectors in $\mathbb{R}^n$, where $\bm{\ell}=(\ell_1,\cdots,\ell_n)$ is a $n$ dimensional non-negative integer vector, and $\bm{x}^{(\bm{\ell})}=({x}^{(\ell_1)}, \cdots, {x}^{(\ell_n)})$. ${x}^{(\ell_k)}$ is the $\ell_k$-th order derivative of ${x}_{k}(t)$, for $k=1,\cdots,n$, with $\bm{x}^{(\bm{0})} \equiv \bm{x}$.
Here we consider maps
$\bm{\phi}: \mathbb{I}\times \mathbb{R}^{ (\sum_{k=1}^{n}\ell_k+1)n}\rightarrow \mathbb{R}^m$ and $\bm{\varphi}: \mathbb{I}\times\mathbb{I}\times \mathbb{R}^{(\sum_{k=1}^{n}\ell_k+1)n}\times \mathbb{R}^{(\sum_{k=1}^{n}\ell_k+1)n}\rightarrow \mathbb{R}^m$ are given real analytic, where possibly $m \not = n$. An integro-differential-algebraic equation ({\IDAE}) can be described as follow:

\begin{equation}\label{eqn:IDAE}
\bm{\phi}(t,\bm{x}^{(\bm{\ell})}{(t)}) +\int_{t_0}^{t}{\bm{\varphi}(t,s,\bm{x}^{(\bm{\ell})}(s),\bm{x}^{(\bm{\ell})}{(t)})ds}= \bm{F}(t,\bm{x},...,\bm{x}^{(\bm{\ell})})= \bm{0}
\end{equation}

An {\IDAE} in Equation (\ref{eqn:IDAE}) consists of differential algebraic equation ({\DAE})'s part --- $\bm{\phi}(t,\bm{x}^{(\bm{\ell})}{(t)})$, noted as $\bm{\Phi}$, and integral algebraic equation ({\IAE})'s part --- $\int_{t_0}^{t}{\bm{\varphi}(t,s,\bm{x}^{(\bm{\ell})}(s),\bm{x}^{(\bm{\ell})}{(t)})ds}$, noted as $\bm{\Psi}$.
If $\bm{\varphi}(t,s,\bm{x}^{(\bm{\ell})}(s),\bm{x}^{(\bm{\ell})}{(t)})=\bm 0$, then $\bm{F}$ is a typical {\DAE} $\bm{\phi}(t,\bm{x}^{(\bm{\ell})}{(t)})=\bm 0$. If $\bm{\phi}(t,\bm{x}^{(\bm{\ell})}{(t)})=\bm 0$, then $\bm{F}$ is a typical {\IAE} $\int_{t_0}^{t}{\bm{\varphi}(t,s,\bm{x}^{(\bm{\ell})}(s),\bm{x}^{(\bm{\ell})}{(t)})ds}=\bm 0$.

 For linear time-varying {\IDAE} systems, the Laplace transform is widely used to solve most of them effectively \cite{Richard2017}. However, when dealing with an {\IDAE} system with a high index or nonlinear or singular Jacobian matrix, the Laplace transform will fail. Collocation methods are also a good choice for numerical solution of such {\IDAE}, but are only applicable to low index  ($\leq1$)\cite{Liang2019} or specific \cite{Pishbin2015} {\IDAE}s.
Thus, similar to {\DAE}s, it is necessary for an {\IDAE} to be processed by structural analysis.

Existing methods \cite{Shampine02,Vieira20011,Brown98,Leimkuhler91} for a {\DAE} to find an initial point on constraints can be applied directly for an {\IDAE} with on integrals at initial time $t=t_0$.  Moreover, the Homotopy method can find all initial points of a polynomial {\DAE} from each component through at least one witness point \cite{Yang2021}. And these witness points can help to detect not only the failure caused by a singular Jacobian matrix but also the failure caused by an incorrect signature matrix (see Example \ref{ex:1}).
However, there is little research in this area.

Much work has been done on different indices \cite{Gear90,Liang2013,Liang2019} for structural analysis.  The differential index of this {\IDAE} can also be defined as the minimum number of differentiation to find all constraints. It is difficult to find hidden constraints directly {\cite{Shampine02}} for {\IDAE}s with high index ($\geq2$). Without index reduction, a numerical solution may drift off the true solution \cite{ZOLF2021}. The direct method {\cite{Gear88}} of index reduction is too complex to be applied. The indirect method for the $\Sigma$-method {\cite{Pryce01}} is based on the signature matrix and is efficient. To make it possible for {\IAE}s {\cite{Lamm2000}} and {\IDAE}s {\cite{Zolfaghari2019}}, the signature matrix had to be redefined. However, this definition of the signature matrix is still incomplete when it gives rise to an {\IDAE} with derivatives.

 An {\IDAE} is called ``\textbf{structurally amenable}" (S-amenable) if the $\Sigma$-method is applicable. Similarly, in a ``\textbf{structurally unamenable} (S-unamenable) {\IDAE}, the $\Sigma$-method may fail since the Jacobian matrix is singular after differentiation. Improved structural methods have been proposed to regularize the Jacobian matrices of S-unamenable {\DAE}s, including direct methods {\cite{Gerdts11,Iwata03,Murota95,Campbell93,Wu13}}. Indirect methods have also been proposed (the LC-method \cite{Tan17}, the embedding method \cite{Yang2021}, the ES-method \cite{Tan17}, the substitution method \cite{Taihei19} and the augmentation method \cite{Taihei19}). For {\IDAE}s, Bulatov \cite{Bulatov11} dealt with some linear explicit singular {\IDAE}s by using special properties of the matrix polynomials. Zolfaghari {\cite{ZOLF2021}} extended the LC-method and the ES-method to linear {\IDAE}s. But these methods also fail in the case of numerical degeneration (see Example \ref{ex:Degeneration}). Essentially, unlike for {\DAE}s with the non-negative optimal value of the signature matrix, the termination of improved structural methods can no longer be guaranteed  since the optimal value may be negative.

In this paper, our contributions can be summarized as:
\begin{itemize}
    \item We modify the definition of the signature matrix for {\IDAE}s with derivatives in their integrals. We also define the degree of freedom for {\IDAE}s and then it is conducive to the termination of improved structural methods.
    \item We propose a detection method to correct the overestimation of the signature matrix due to constraints been ignored.
    \item We extend the embedding method for {\IDAE}s that can help to restore full-rank Jacobian matrices without algebraic elimination.

\end{itemize}

\section{Structural Analysis by $\Sigma$-method for {\DAE}s}\label{s:sa}

The $\Sigma$-method is an efficient method for {\DAE}s, and this motivates on extension of it to {\IDAE}s. In this section, we provide a brief introduction to it to explain terminology and notation.

\begin{define}\label{def:LD}
Suppose that the $k$-th order of derivative of $x_j$ occurs in $F_i$, then the partial derivative $\partial F_i/ \partial x_j^{(k)}$ is not identical zero. The \textit{leading derivative} of an equation  $F_i=0$ with respect to
$x_j$ is denoted by $\LD(F_i,x_j)$ and is the highest order of derivative such that some $F_i\in \bm{F}$ depends on $x_j^{(k)}$ for some $k\in \mathbb{Z}$.
Thus, we construct an $n \times n$ signature matrix $\bm \sigma(\bm{F})=[\sigma_{i,j}]_{1 \leq i \leq n,1 \leq j \leq n}(\bm{F})$ of {\DAE}s $\bm{F}$ by Pryce \cite{Pryce01}:
\begin{equation}\label{signature_DAE}
    [\sigma_{i,j}](\bm{F}):= \left\{%
\begin{array}{ll}
    \hbox{ the order of $\LD(F_i,x_j(t))$}, \;\;\hbox{if $x_j(t)$ occurs in $F_i$;} \\
    -\infty, \;\; \hbox{otherwise.} \\
\end{array}%
\right.
\end{equation}
\end{define}

Let $\bm{c} =
(c_1, ..., c_n )$ and $\bm{d} = (d_1, ..., d_n )$ be a dual optimal solution.
There must be a highest-value transversal ({\HVT}) of the signature matrix, denoted by $\sum\limits_{{(i,j)\in T}}{\sigma_{ij}}$, in which  $d_j - c_i = \sigma_{ij}$ for all $(i,j)\in T$, and $T$ is the set of indices of elements in different rows and columns corresponding to the maximum value. According to \cite{ZOLF2021}, the dual problem is equivalent to minimizing $\sum\limits_{{(i,j)\in T}}{\sigma_{ij}}=\sum{d_j}-\sum{c_i}$.

This can be formulated as an
assignment problem ({\AP}):
\begin{equation}\label{LPP}
   \delta(\bm{F}) \left\|%
\begin{array}{l}
    \hbox{Minimize~~} \delta = \sum d_j - \sum c_i,  \\
    \hbox{~~~~where~~} d_j -c_i \geq \sigma_{ij},  \\
    ~~~~~~~~~~~~~~c_i \geq 0 \\
\end{array}%
\right.
\end{equation}
Let $\delta(\bm{F})$ be the optimal value of the problem (\ref{LPP}).

Let $\ctotDer$ be the formal total derivative operator with respect to independent variable $t$:
\begin{equation}\label{eq:Dt}
\ctotDer = \frac{\partial}{\partial t} +  \sum_{k=0}^{\infty} \bm{x}^{(k+1)} \frac{\partial }{\partial \bm{x}^{(k)}}
\end{equation}

If we specify the  differentiation order for $F_i$ to be $c_i$, then $c_{i}\geq 0$, for $i = 1, \dots, n$. Then the  differentiation of $\bm{F}$  up to the order  $\bm{c}$ is
\begin{equation}\label{eq:DefFc}
\bm{F}^{(\bm{c})}= \{F_1, \ctotDer F_1,..., \ctotDer^{c_1}F_1\}\cup \cdots \cup  \{F_n, \ctotDer F_n,..., \ctotDer^{c_n}F_n\} =\ctotDer^{\bm{c}}\bm{F}
\end{equation}

 The number of equations of $\bm{F}^{(\bm{c})}$ is $n + \sum_{i=1}^{n} c_i$.

Without loss of generality, we
assume $c_1 \geq c_2 \geq \cdots \geq c_n$, and let $k_c = c_1$,
which is closely related to the \textit{index} of system $\bm{F}$ (see
\cite{Pryce01}). The $r$-th order derivative of $F_j$
with respect to $t$ is denoted by $F_j^{(r)}$. Then we can
partition $\bm{F}^{(\bm{c})}$ into $k_c + 1$ parts,
for $0\leq p \in \mathbb{Z}\leq k_c$ given by
\begin{equation}\label{eq:B_i}
    \bm{B}_p:= \{ F_j^{(p+c_j - k_c )} : 1\leq j \leq n, p+c_j - k_c
    \geq 0 \}.
\end{equation}

Here, we call $\bm{B}_{k_c}$ as the \textbf{top block} of $\bm{F}^{(\bm{c})}$ and $ \bm{F}^{(\bm{c}-1)} = \{\bm{B}_0,...,\bm{B}_{k_c-1}\}$ as the \textbf{constraints}.

Similarly, let $k_d = \max(d_j)$ and we can partition all the variables into $k_d+1$ parts:
\begin{equation}\label{eq:U_i}
   \bm{X}^{(q)}:= \{ x_j^{(q+d_j - k_d )} : 1\leq j \leq n \}.
\end{equation}

Here, if $(q+ d_j - k_d) < 0$, it means it's integral of $x_j$ with respect to the independent variable $t$.





\section{Structural Analysis for {\IDAE}s}\label{s:key}
 However,
for {\IDAE}s, the existing definition of the signature matrix has encountered great challenges, due to the derivatives in the {\IAE}'s part of the {\IDAE}. 

The improved structural methods for {\DAE}s to find hidden constraints are equivalent to decreasing its optimal value $\delta$. However, when it encounters to {\IDAE}s, there are several difficulties: (a) the scheme of the combinatorial relaxation framework \cite{Taihei19} in Phase $3$ is not applicable; (b) the optimal value may be negative in Example \ref{ex:Zolf_2}; (c) the termination of improved structural methods can not be ensured. 

 Thus, we should adapt several general definitions to erase these difficulties in structural analysis.
 
\subsection{Modified Signature Matrix for {\IDAE}s}\label{ss:SM}

 Unlike {\DAE}s or {\IAE}s, the signature matrix of {\IDAE}s must contain the information of both parts.

For the {\DAE} part of the {\IDAE}, we can easily construct an $n \times n$ signature matrix $\bm \sigma(\bm{\Phi})=[\sigma_{i,j}]_{1 \leq i \leq n,1 \leq j \leq n}(\bm{\Phi})$ according to Definition \ref{def:LD}. For the {\IAE}'s part, \cite{Lamm2000} gave an incomplete definition of its signature matrix without considering derivatives. In this section, we give a new the definition of the signature matrix for {\IDAE}s.

\begin{define}\label{def:v-somoothing}
Let $\bm{\Psi}=\int_{t_0}^{t}{\bm{\varphi}(t,s,\bm{x}^{(\bm{\ell})}(s),\bm{x}^{(\bm{\ell})}{(t)})ds}$  be sufficiently smooth, defined in Equation (\ref{eqn:IDAE}). For any $x_j$ of $\bm{x}$ and for some $t \in \mathbb{I}$, let $\upsilon_{i,j} \geq 1$ be the \textbf{smallest} integer of some $\varphi_i \in \bm{\varphi}$ for which
\begin{equation}\label{eq:v-smooth}
\frac{\partial}{\partial x_j}\left(\left.{\frac{\partial^{\upsilon_{i,j}-1}}{\partial t^{\upsilon_{i,j}-1}}\varphi_i(t,s,\bm{x}^{(\bm{\ell})}(s),\bm{x}^{(\bm{\ell})}{(t)})}\right|_{s=t}\right)\neq 0
\end{equation}


Let $\omega_{i,j} \geq 1$ be the \textbf{largest} integer for which
\begin{equation}\label{eq:w-smooth}
\frac{\partial}{\partial x_j}\left(\left.{\frac{\partial^{\omega_{i,j}-1}}{\partial t^{\omega_{i,j}-1}}\varphi_i(t,s,\bm{x}^{(\bm{\ell})}(s),\bm{x}^{(\bm{\ell})}{(t)})}\right|_{s=t}\right)\neq 0
\end{equation}

We say $\bm{\Psi}$ is $\omega_{i,j}$-integral with respect to $x_j$.

If Equation (\ref{eq:v-smooth}) does not hold for any integer $\upsilon_{i,j} \geq 1$, then we define $\upsilon_{i,j}=\infty$ which means $\infty$-smoothing. 

If Equation (\ref{eq:w-smooth}) does not hold for any integer $\omega_{i,j} \geq 1$, then we define $\omega_{i,j}=0$ which means $x_{j}$ does not occur in ${\varphi_i}$.
\end{define}

\begin{remark}\label{rem:smooth}
Note that Equation (\ref{eq:v-smooth}) implies that $\bm{\Psi}$ is $\upsilon_{i,j}$-smoothing \cite{Lamm2000} with respect to $x_j$. In particular, this definition also applies to while $x_{j}$ does not occur in ${\varphi_i}$.
\end{remark}

\begin{example}\label{ex:v,w}
Let $\varphi_i(t,s,\bm{x}(s))=x_2(s)^{2}+(t-s)x_1(s)$, where $\bm{x}(s)=(x_1(s),x_2(s),x_3(s))\neq \bm{0}$. Here, 

\begin{equation*}
    \begin{array}{ll}
    \varphi_i(t,t,\bm{x}(t))=x_2(t)^2 & \\
          \frac{\partial}{\partial x_1}\left(\varphi_i(t,t,\bm{x}(t))\right)= 0, & \frac{\partial}{\partial x_{2}}\left(\varphi_i(t,t,\bm{x}(t))\right)= 2\cdot x_2 \neq 0, \\
         \frac{\partial}{\partial x_1}\left(\left.\frac{\partial}{\partial t}\varphi_i(t,s,\bm{x}(s))\right|_{s=t}\right)=1\neq 0, & \frac{\partial}{\partial x_2}\left(\left.\frac{\partial}{\partial t}\varphi_i(t,s,\bm{x}(s))\right|_{s=t}\right)=0,\\
         \frac{\partial}{\partial x_1}\left(\left.\frac{\partial^{k}}{\partial t^{k}}\varphi_i(t,s,\bm{x}(s))\right|_{s=t}\right)= 0, & \frac{\partial}{\partial x_2}\left(\left.\frac{\partial^{k}}{\partial t^{k}}\varphi_i(t,s,\bm{x}(s))\right|_{s=t}\right)=0,~for~k\geq 2.
     \end{array}
\end{equation*}

Therefore, by Equation (\ref{eq:v-smooth}) and Remark \ref{rem:smooth}, $\bm{\Psi}$ is $2$-smoothing with respect to $x_1$ and $1$-smoothing with respect to $x_2$ and $\infty$-smoothing with respect to $x_3$.  Significantly, $x_2(t)$ appearing in $\varphi_i$ is a dependent variable with independent variable $t$, which indicates that its corresponding element in the signature matrix of the {\DAE} part of {\IDAE} should be considered by Definition \ref{def:LD}.

By Equation (\ref{eq:w-smooth}), $\bm{\Psi}$ is $2$-integral with respect to $x_1$ and $1$-integral with respect to $x_2$, and $0$-integral with respect to $x_3$.

\end{example}

Since $\sigma_{i,j}$ is the order of the highest derivative of variable $x_j(s)$ occurs in the $i$-th function \cite{Pryce01}, we can define an $n \times n$ signature matrix as follows:

\begin{define}[Signature Matrix for {\IAE} Part]\label{def:sm_iae}
 Consider an {\IDAE} of Equation (\ref{eqn:IDAE}), we define the signature matrix, as $n \times n$ matrix $\bm \sigma(\bm{\Psi})=[\sigma_{i,j}]_{1 \leq i \leq n,1 \leq j \leq n}(\bm{\Psi})$ of {\IAE} part:
\begin{equation}\label{signature_IAE}
    [\sigma_{i,j}](\bm{\Psi}):=  \left\{%
\begin{array}{ll}
\hbox{$\max(\beta_1, \beta_2)$,}\;\;\ \hbox{$x_j(t)$ and $x_j(s)$ occur in $\varphi_i$;}\\
\hbox{ $\beta_2$,}\;\;\ \hbox{ only $x_j(s)$ occurs in $\varphi_i$.}\\
\hbox{$-\infty$,}\;\;\ \hbox{ otherwise;}
\end{array}\right.
\end{equation}
Where $\beta_1$ is the order of $\LD(\varphi_i,x_j(t))$ and $\beta_2$ is the order of $\LD(\varphi_i,x_j(s))-\upsilon_{i,j}$.
\end{define}
When $\varphi_i$ does not contain the derivative of $x_j$, the order of $\LD(\varphi_i,x_j)$ is $0$, which is the same as the definition of \cite{Zolfaghari2019}.


\begin{define}[Signature Matrix for {\IDAE}] \label{de:sm_idae}
  The $n \times n$ signature matrix $\bm \sigma(\bm{F})=[\sigma_{i,j}]_{1 \leq i \leq n,1 \leq j \leq n}(\bm{F})$ of {\IDAE} $\bm{F}$ of Equation (\ref{eqn:IDAE}) is defined as:
\begin{equation}\label{signature}
    [\sigma_{i,j}](\bm{F}):= \max_{i,j}{\left([\sigma_{i,j}](\bm{\Phi}),[\sigma_{i,j}](\bm{\Psi})\right)}
\end{equation}
\end{define}
Obviously, this is equivalent to the signature matrix defined by Zolfaghar in \cite{ZOLF2021} in the case that there is no derivative in {\IAE}s part.


\begin{example}\label{ex:Zolf}
\sloppy{}
Consider the following {\IDAE} \cite{ZOLF2021} with dependent variables $x_1\left( t \right)$ and $x_2\left( t \right)$:
\[{\bm{F}}=\left\{\begin{array}{l}
e^{-x_1(t)-x_2(t)}-g_1(t)\\
\int_{t_0}^{t}{\left(x_1(s)+x_2(s)+\left( t-s \right) x_1(s)\cdot x_2(s)\right)ds} - g_2(t)
\end{array}\right.\]
Where $g_1(t)$ and $g_2(t)$ are given functions. Its Jacobian  \[\bm{\Jac}=\left(\begin{array}{cc}
-e^{-x_1(t)-x_2(t)} & -e^{-x_1(t)-x_2(t)} \\
1 & 1\\
\end{array}\right)\] is identically singular, with rank equals to $1$.

By Equation (\ref{eqn:IDAE}), we get $\bm{\Phi}=\left\{\begin{array}{l}
e^{-x_1(t)-x_2(t)}-g_1(t)\\
 - g_2(t)
\end{array}\right.$ and
$\bm{\Psi}=\left\{\begin{array}{l}
0\\
\int_{t_0}^{t}{\left(x_1(s)+x_2(s)+\left( t-s \right) x_1(s)\cdot x_2(s)\right)ds}
\end{array}\right.$

Then, it easily follows that $[\sigma_{i,j}](\bm{\Phi})= \left(
        \begin{array}{cc}
          0 & 0\\
          -\infty & -\infty \\
        \end{array}
      \right)$ by Equation (\ref{signature_DAE}), and
     $[\sigma_{i,j}](\bm{\Psi})= \left(
        \begin{array}{cc}
         -\infty & -\infty\\
          -1 & -1 \\
        \end{array}
      \right)$ by Equation (\ref{signature_IAE}). So
      $\bm{\omega}= \left(
        \begin{array}{cc}
          0 & 0\\
          2 & 2 \\
        \end{array}
      \right)$ by Equation (\ref{eq:w-smooth}).
Thus, $[\sigma_{i,j}](\bm{F})= \left(
        \begin{array}{cc}
          0 & 0\\
          -1 & -1 \\
        \end{array}
      \right)$ by Equation (\ref{signature}).
\end{example}




\subsection{The Degree of Freedom of {\IDAE}s}\label{ss:dof}
Finding hidden constraints is essential to minimizing the degree of freedom {\DOF} of an {\IDAE} $\bm{F}$. The termination of improved structural methods depends on the existence of the solution, which implies $DOF\geq 0$. The definition of general form of {\DOF} is as follows:

\begin{define}[{\DOF} for {\IDAE}]\label{def:DOF}
Let a system $\bm{F}$ contains $m$ equations and $n$ dependent variables, {\DOF} of $\bm{F}$ is ${DOF}(\bm{F}):= n -\rank (\bm{\Jac})$ which determines the existence of the solution. Without redundant equations, then ${DOF}(\bm{F}):= n - m$.
\end{define}

There is a relationship between {\DOF} and optimal value, which can help to deduce the {\DOF} of {\IDAE}s directly.

\begin{prop}\label{prop:DOF}
Let $(\bm{c},\bm{d})$ be the optimal solution of Problem (\ref{LPP}) for a given {\IDAE} $\bm{F}$. And $x_j$ is $\omega_{i,j}$-integral in $\varphi_i$ of $F_i$. Then ${DOF}(\bm{F}):= \delta (\bm{F})+\sum\limits_{j}{\max\limits_{i}{\omega_{i,j}}}$.
\end{prop}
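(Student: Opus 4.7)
The plan is a direct counting argument: apply Definition~\ref{def:DOF} to the optimally prolonged system $\bm{F}^{(\bm{c})}$ and manipulate the counts until the right-hand side of the claim emerges.

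First I would tally the number of equations in $\bm{F}^{(\bm{c})}$. By Equation~(\ref{eq:DefFc}), prolonging each $F_i$ by $c_i$ steps contributes $c_i+1$ equations, so $\bm{F}^{(\bm{c})}$ contains exactly $n+\sum_i c_i$ equations, which under the no-redundancy assumption of Remark~\ref{rem:dae} coincides with $\rank(\bm{F}^{(\bm{c})})$.

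Next I would enumerate all jet variables occurring in $\bm{F}^{(\bm{c})}$. Each $x_j$ brings in its positive-order derivatives $x_j^{(0)},\dots,x_j^{(d_j)}$---giving $d_j+1$ items by the definition of $d_j$ as the highest-order derivative of $x_j$ present in $\bm{F}^{(\bm{c})}$---and, in addition, the smoothing-integral ``negative-order'' states $x_j^{(-1)},\dots,x_j^{(-\omega_j)}$ inherited from the {\IAE} part, where $\omega_j:=\max_i\omega_{i,j}$. These negative-index states are exactly those whose interpretation is spelled out just after Equation~(\ref{eq:U_i}), and their range is pinned down by Equation~(\ref{eq:w-smooth}). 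Summing across $j$ gives a grand total of $n+\sum_j d_j+\sum_j \omega_j$ jet variables. Plugging both counts into Definition~\ref{def:DOF} (which reads ``variables minus equations'' in the non-redundant setting), the {\DOF} of $\bm{F}$ is
\[ \Bigl(n+\sum_j d_j+\sum_j\omega_j\Bigr)-\Bigl(n+\sum_i c_i\Bigr) = \Bigl(\sum_j d_j-\sum_i c_i\Bigr)+\sum_j\omega_j = \delta(\bm{F})+\sum_j\max_i\omega_{i,j}, \]
as claimed.

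The main obstacle is justifying that the jet-variable list is neither too short nor too long---i.e., that the positive-order list really stops at $x_j^{(d_j)}$, that the negative-order list truly reaches $x_j^{(-\omega_j)}$, and that no further smoothing integral of order exceeding $\omega_j$ intrudes. The first bound is built into the definition of $d_j$ via the ILP. The second follows from Equation~(\ref{eq:w-smooth}): since $\omega_{i,j}$ is the \emph{largest} integer with a genuine dependence of the differentiated integrand on $x_j$, all intermediate smoothing-integral orders $1,\dots,\omega_j$ are realized in some prolonged equation of $\bm{F}^{(\bm{c})}$, while deeper orders are eliminated by differentiation through the integral. Once both bounds are verified---using Lemma~\ref{lem:Griewank} to track how derivatives propagate through prolongation and the fact that $\omega_j$ is well defined by taking the maximum over $i$---the counting argument above closes the proof.
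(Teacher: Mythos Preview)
Your proposal is correct and follows essentially the same route as the paper's own proof: both are direct counting arguments on $\bm{F}^{(\bm{c})}$, tallying $n+\sum_i c_i$ equations against $n+\sum_j(d_j+\max_i\omega_{i,j})$ jet variables (the $d_j+1$ derivative orders plus the $\max_i\omega_{i,j}$ smoothing-integral orders for each $x_j$) and subtracting. Your treatment of the variable bounds is in fact a bit more explicit than the paper's, which simply asserts the counts; the paper does add the observation that $DOF(\bm{F})=DOF(\bm{F}^{(\bm{c})})$ via $\delta(\bm{F})=\delta(\bm{F}^{(\bm{c})})$, which you leave implicit.
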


\begin{proof}
Since any $x_j$ in $\varphi_i$ of $F_i$ is $\omega_{i,j}$-integral, there must be a primitive function with respect to the dependent variable $x_j$, whose $\omega_{i,j}$-th derivative with respect to  the independent variable $t$ is $x_j$. Thus there are $\omega_{i,j}$ dependent variables related to the integral of $x_j$ in $\varphi_i$ of $F_i$. Hence, there are $\max\limits_{i}{\omega_{i,j}}$ dependent variables related to the integral of $x_j$ in $\bm{F}$. Since $\bm{F}^{(\bm{c})}$ is the differentiation of $\bm{F}$, there are also $\max\limits_{i}{\omega_{i,j}}$ dependent variables related to the integral of $x_j$ in $\bm{F}^{(\bm{c})}$.

Obviously, the derivatives in  $\bm{F}^{(\bm{c})}$ are $\bm{x^{(\leq{d})}}$. Assume there are $n$ equations in $\bm{F}$. There must be $n + \sum\limits_{j}{(d_j+\max\limits_{i}{\omega_{i,j}})}$ dependent variables
and $n + \sum\limits_{i}{c_i}$ equations in $\bm{F}^{(\bm{c})}$.
And there must be $n + \sum\limits_{j}{(d_j+\max\limits_{i}{\omega_{i,j}})}-\sum\limits_{i}{c_i}$ dependent variables and $n $ equations  in $\bm{F}$. By Definition \ref{def:DOF}, $DOF (\bm{F})=DOF (\bm{F}^{(\bm{c})})$. Moreover, since $\delta (\bm{F})=\sum\limits_{j}{d_j}-\sum\limits_{i}{c_i}$, then
$$DOF (\bm{F})=\delta (\bm{F})+\sum\limits_{j}{\max\limits_{i}{\omega_{i,j}}}$$.

\foorp
\end{proof}

In the special case of a {\DAE} $\bm{F}$, we have  $\omega_{i,j}=0$. Then the  {\DOF} of $\bm{F}$ is $\delta (\bm{F})$, this is the same as the definition of {\DOF} in \cite{Tan17}.

Since the optimal value of $\bm{F}$ is limited to square systems, the {\DOF} of non-square systems $\bm{F}^{(\bm{c})}$ should be extended.

\begin{prop}\label{define_delta1}
Let an {\IDAE} $\bm{F}$  consist of two blocks $\bm{A}$ and $\bm{B}$,  where $\bm{F}$ contains $p$ equations and $n$  dependent variables $p\geq n$, and suppose the signature matrix of $\bm{A}$ is an $n\times n$ square matrix. So $\bm{B}$ contains the remaining $(p-n)$ equations. Let $DOF(\bm{A})$ be the degree of freedom of $\bm{A}$'s signature matrix. Then  $DOF(\bm{F}) = DOF(\bm{A}) - \#eqns(\bm{B})$, where $\#eqns(\bm{B})$ is the number of equations in $\bm{B}$.
\end{prop}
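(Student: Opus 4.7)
The plan is to combine Proposition \ref{prop:DOF} with the no-redundant-equations hypothesis of Remark \ref{rem:dae}, and then reduce the claim to simple equation/variable counting at the jet level. First I would apply Proposition \ref{prop:DOF} to the square block $\bm{A}$, whose signature matrix is $n\times n$, obtaining the explicit formula
\[
DOF(\bm{A}) = \delta(\bm{A}) + \sum_{j}\max_{i}\omega_{i,j}(\bm{A}).
\]
Equivalently, by the jet-variable counting in the proof of Proposition \ref{prop:DOF}, this equals the number of jet variables $n+\sum_{j}(d_j^{A}+\max_{i}\omega_{i,j}^{A})$ appearing in the prolonged system $\bm{A}^{(\bm{c}^{A})}$ minus the number of equations $n+\sum_{i}c_i^{A}$, after subtracting the $\sum_i c_i^A$ prolongation identifications.

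Next I would realize $\bm{F}$ as $\bm{A}$ augmented by the $p-n$ equations of $\bm{B}$ on the same $n$ dependent variables $\bm{x}$. By Definition \ref{def:DOF}, $DOF$ equals the jet-level variable count minus the rank of the system. Under the no-redundancy assumption (Remark \ref{rem:dae}), each equation of $\bm{B}$ is functionally independent of all the previous ones, so adjoining it drops the rank by exactly $1$ and thus drops the $DOF$ by $1$. Iterating this over all $\#eqns(\bm{B}) = p-n$ equations of $\bm{B}$ gives
\[
DOF(\bm{F}) = DOF(\bm{A}) - \#eqns(\bm{B}),
\]
as required.

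The main obstacle, which is really a bookkeeping point, is checking that the jet variable count underlying the $DOF$ of $\bm{F}$ via Definition \ref{def:DOF} is consistent with the count produced by Proposition \ref{prop:DOF} for $\bm{A}$. In particular one must verify that $\bm{B}$ does not introduce higher-order derivatives or deeper integrals of any $x_j$ than those already present in the jet space of $\bm{A}^{(\bm{c}^{A})}$; if it does, the jet variable count grows, but the prolongation orders $\bm{c}^{F}$ also grow correspondingly, and the two contributions cancel so that each added $\bm{B}$-equation still produces a net unit decrease in $DOF$. Once this consistency is in place, the counting argument above yields the proposition immediately.
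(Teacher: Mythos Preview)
Your argument is correct in spirit and reaches the same conclusion, but it is considerably more elaborate than the paper's own proof, which is a two-line variable/equation count directly from Definition~\ref{def:DOF}. The paper simply observes that, because the signature matrix of $\bm{A}$ is $n\times n$, the block $\bm{A}$ already involves every dependent variable that $\bm{F}$ does; hence $\#vars(\bm{F})=\#vars(\bm{A})$, and under the no-redundancy assumption $\#eqns(\bm{F})=\#eqns(\bm{A})+\#eqns(\bm{B})$. Subtracting gives $DOF(\bm{F})=DOF(\bm{A})-\#eqns(\bm{B})$ immediately. No appeal to Proposition~\ref{prop:DOF}, prolongation orders $(\bm{c},\bm{d})$, or the $\omega$-integral counts is needed.

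Your detour through Proposition~\ref{prop:DOF} and the explicit jet-variable enumeration is not wrong, but it forces you to confront the bookkeeping issue you flag at the end: whether $\bm{B}$ could introduce higher derivatives or deeper integrals than appear in $\bm{A}^{(\bm{c}^A)}$. The paper sidesteps this entirely by working at the level of Definition~\ref{def:DOF} and tacitly using that $\bm{A}$, being the block with the square signature matrix, already carries all the dependent variables of $\bm{F}$ (in the intended application $\bm{A}$ is a top block and $\bm{B}=\bm{F}^{(\bm{c}-1)}$ consists only of lower-order constraints, so this is automatic). Your cancellation argument for the obstacle is plausible but unnecessary once you adopt the paper's more direct route.
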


\begin{proof}
Since the set of dependent variables of block $\bm{B}$ is a subset of the dependent variables of block $\bm{A}$, $DOF(\bm{A})=\#vars(\bm{A})-\#eqns(\bm{A})$ and $DOF(\bm{F})=\#vars(\bm{A})-\#eqns(\bm{F})$, where $\#vars(\bm{A})$ is the number of dependent variables in $\bm{A}$. Since there are no redundant equations, $\#eqns(\bm{F})=\#eqns(\bm{A})+\#eqns(\bm{B})$. Hence $DOF(\bm{F}) = DOF(\bm{A}) - \#eqns(\bm{B})$.
 \foorp
\end{proof}







\begin{example}\label{ex:Zolf_2}
 \sloppy{}
Consider the {\IDAE} given in Example \ref{ex:Zolf} which has no constraints. The structural information obtained by the $\Sigma$-method is that the dual optimal solution is $\bm{c}=(0,1)$ and $\bm{d}=(0,0)$. Then the {\DOF} of this {\IDAE} by Proposistion \ref{prop:DOF} is $DOF(\bm{F})=\delta(\bm{F})+\sum\limits_{j}{\max\limits_{i}{\omega_{i,j}}}=-1+4=3$.

\end{example}



\section{Detection Method for Incorrect Signature Matrix}\label{s:detect}

  Unfortunately, the $\Sigma$-method updated with the definitions in section \ref{s:key} may fail to overestimate some ``true" $\sigma_{i,j}$. That will yield incorrect optimal solutions and hidden constraints \cite{Tan17}. The case of an incorrect signature matrix is a typical case of such failure. In that case, the function corresponding to $\sigma_{i,j}$, {\ie} to coefficient and Equation (\ref{eq:v-smooth}), {\etc}, may be vanish on the constraints of {\IDAE}. This occours in Example \ref{ex:1}, and may lead to incorrect optimal solutions.

\begin{example}\label{ex:1}
Consider the following {\IDAE} with dependent variables $x(t)$ and $y(t)$:
\begin{equation}\label{eq:ex1}
\bm{F} = \left\{y\left( t \right) -  \dot{x}\left( t \right) , x\left( t \right) + \int_{t_0}^{t} {\left(t-s\right)\cdot\left(\frac{y\left( s \right)}{2} -  \dot{x}\left( s \right)\right)\cdot y\left( s \right)}ds \right\}.
\end{equation}
Then 
$$
 [\sigma_{i,j}](\bm{F})=\begin{array}{ccc}
 & \begin{array}{cc} ~x & ~y \end{array} & c_i\\
 & \left(\begin{array}{cc}
   ~1 & ~0\\
     ~0 & \color{blue}\bm{-2}
 \end{array}\right) & \begin{array}{c}
   0\\
   \color{blue}\bm{2}
 \end{array}\\ d_j&\begin{array}{cc}
   ~\color{blue}\bm {2} & ~0
 \end{array} &
\end{array}   \color{red}\bm{\times}
$$

$$
 [\sigma_{i,j}](\bm{F})=\begin{array}{ccc}
  & \begin{array}{cc} x & ~~~y \end{array} & c_i\\
 & \left(\begin{array}{cc}
   ~~1 & ~0\\
    ~~0 & \color{blue}\bm{-\infty}
 \end{array}\right) & \begin{array}{c}
   0\\
   \color{blue}\bm {1}
 \end{array}\\ ~~d_j&\begin{array}{cc}
  \color{blue}\bm {1} & ~~~0
 \end{array} &
\end{array}   \color{red}\bm{\checkmark}
$$

In the latter equation, for $y$, the Definition of Equation ({\ref{eq:v-smooth}}) implies 
$$\frac{\partial}{\partial y}\left.\left(\left(t-s\right)\cdot\left(\frac{y\left( s \right)}{2} -  \dot{x}\left( s \right)\right)\cdot y\left( s \right)\right)\right|_{s=t}=0,$$ $$\frac{\partial}{\partial y}\left.\frac{\partial}{\partial t}\left(\left(t-s\right)\cdot\left(\frac{y\left( s \right)}{2} -  \dot{x}\left( s \right)\right)\cdot y\left( s \right)\right)\right|_{s=t}=y\left( t \right)-\dot{x}\left( t \right),$$
$$\left.\frac{\partial}{\partial y}\frac{\partial^{k}}{\partial t^{k}}\left(\left(t-s\right)\cdot\left(\frac{y\left( s \right)}{2} -  \dot{x}\left( s \right)\right)\cdot y\left( s \right)\right)\right|_{s=t}=0,~for~k\geq2.$$ 

However, $y\left( t \right)-\dot{x}\left( t \right)$ is \textcolor{red}{zero} by Equation (\ref{eq:ex1}). That means the latter equation is $\infty$-smoothing rather than $2$-smoothing with respect to $y$.
\end{example}

 To find the correct signature matrix, we need to determine whether each of its elements is vanishing or not on the known constraints. Gr\"{o}bner bases \cite{Geddes1992} or Triangular Decomposition \cite{Collins1982} are possible approaches to this problem,
but they are high complexity and are only feasible in polynomial cases. Besides that, the signature matrix may be different for each component of constraints. In this section, we apply an efficient detection method \cite{Yang2021} to construct the corresponding signature matrix for each component of constraints.

\begin{prop}[Proposition $2.2.8$ of \cite{KrantzParks02}]\label{prop:real0}
If $f_1, ..., f_m$ are real analytic in some neighbourhood of
the point $\bm p\in \R^n$  and $g$ is real analytic in some neighbourhood of the point
$(f_1(\bm p),  ... , f_m(\bm p)) \in \R^m$, then the composition of functions $g(f_1(\bm x), ...,f_m(\bm x))$ is real analytic
in a neighborhood of $\bm p$.
\end{prop}

\begin{prop}[Proposition $2.2.3$ of \cite{KrantzParks02}]\label{prop:real1}
Let $f$ be a real analytic function defined  on an open subset $U \subset \R^n$, Then $f$ is continuous and has continuous, real analytic partial derivatives of all orders. Further, the indefinite integral with respect to any variable is real analytic.
\end{prop}

According to Proposition \ref{prop:real0} and Proposition \ref{prop:real1}, since  $\bm{\phi}$ and $\bm{\varphi}$ are real analytic, thus the {\IDAE} $\bm{F}$ is real analytic.

\begin{define}[Real zero set, singular set]\label{de:zero_set}
    The real zero set of a real analytic system $\bm f= \bm 0$ is denoted by $Z_{\R}(\bm f)$. The singular set of $\bm f= \bm 0$, denoted by $\Sing$, are those points at which $Z_{\R}(\bm f)$ is locally not an analytic manifold.
\end{define}
Then $Z_{\R}(\bm f) / \Sing =\cup_{i \in \mathcal{I}} C_i$
where each $C_i$ is a connected component of $Z_{\R}(\bm f) / \Sing$  of the analytic system. Moreover $C_i$ is an analytic manifold. 
\begin{define}[Component]\label{de:component}
We call $C_i$ a \textbf{component of} $Z_{\R}(\bm f)$. If $Z_{\R}(\bm f)$ is the zero set of the constraints of an analytic {\DAE}, then $C_i$ is called a \textbf{component of the constraints}.
\end{define}
 
 Consider a component $C$ of $Z_{\R}(\bm{F}^{(\bm{c})})$ with a real point $\bm p \in \R^n$.
Suppose $\rank \bm{\Jac}(\bm p) = r \leq n$. Without loss of generality, we assume that the sub-matrix $\bm{\Jac}(\bm p)[1:r,1:r]$ has full rank.

\begin{lemma}[Lemma $3.2$ of \cite{Yang2021}]\label{lem:measure0}
Let $Z_{\R}(\bm F)$ be the zero set of a real analytic system, $C$ be a component of $Z_{\R}(\bm F)$ in $\R^{m+n}$ of dimension $m$ and let $f$ be a real analytic function on $\R^{m+n}$.
Then the intersection $C\cap Z_{\R}(f)$ is equal to $C$ or has measure zero over $C$.
\end{lemma}

\begin{lemma}[Lemma $3.3$ of \cite{Yang2021}]\label{lem:whole}
Let $Z_{\R}(\bm F)$ be the zero set of a real analytic system, $C$ be a component of $Z_{\R}(\bm F)$. If $\bm{\Jac}[1:r,1:r]$ has full rank at some point $\bm p$ on $C$, then it is non-singular almost everywhere on $C$.
\end{lemma}

\begin{cor}\label{col:1}
   Let $Z_{\R}(\bm F)$ be the zero set of a real analytic system, $C$ be a component of $Z_{\R}(\bm F)$. If Equation ({\ref{eq:v-smooth}) or  Equation (\ref{eq:w-smooth}}) holds at an arbitrary point $\bm p$ on $C$, then it holds almost everywhere of the whole component. And, if Equation ({\ref{eq:v-smooth}) or Equation (\ref{eq:w-smooth}}) doesn't hold at this point, then it is $\infty$-smoothing or $\infty$-integral over the whole component.
\end{cor}
\begin{proof}
  According to Proposition \ref{prop:real0} and Proposition \ref{prop:real1}, since $\bm{\varphi}$ are real analytic, then  Equation ({\ref{eq:v-smooth}) and Equation (\ref{eq:w-smooth}}) are real analytic. If Equation ({\ref{eq:v-smooth}) or Equation (\ref{eq:w-smooth}}) holds, then the functions on the left hand sides of their equations are nonzero, otherwise these functions have measure zero. The proof of this corollary can be easily completed in a similar manner to the proof of Lemma \ref{lem:whole}.
  \foorp
\end{proof}

 Corollary \ref{col:1} yields Algorithm \ref{alg:1} for the detection method to construct a true signature matrix.
 
\begin{algorithm}
\caption{Construct Signature Matrix by Detection Method}\label{alg:1}
\begin{algorithmic}[1]
\Procedure{SigMat}{$\bm{F}$}
    \State Compute an initial point $\bm{p}$ from one component.
    \State Set $N:=10000$ \Comment{maximum number of times}.
    \For{$1 \leq i \leq n$} 
    \For{$1 \leq j \leq n$}
    \State Compute $[\sigma_{i,j}](\bm{\Phi})$ and $\LD(\varphi_i,x_j(s))$ by Equation (\ref{signature_DAE}).
    \For{$k$ from $1$ to $N$, $k:=k+1$}
        \State  $v_{i,j} := k$.
        \If{Equation (\ref{eq:v-smooth}) holds at the initial point $\bm{p}$}
        \State  Break.
        \EndIf
        \If {k == N}
         \State $v_{i,j}:=-\infty$, break.
        \EndIf
    \EndFor
    \EndFor
    \EndFor
    \State Compute $[\sigma_{i,j}](\bm{\Psi})$ by Equation (\ref{signature_IAE}).
     \State Compute $[\sigma_{i,j}](\bm{F})$ by Equation (\ref{signature}).
     \State Return $[\sigma_{i,j}](\bm{F})$.
\EndProcedure 
\end{algorithmic}
\end{algorithm}

If one or more solutions of the constraints can be found, the correct signature matrix on the component for each solution can be determined by the detection method. Actually, we can easily find a single solution to the constraints by root finding
methods, {\eg} Newton’s method {\cite{Dennis1996}}, SOR method {\cite{Ortega2000}}, tensor methods {\cite{Schnabel1984,Bader2005}}, {\etc}. Interval methods \cite{Pascal1997,Granvilliers2006} can find all points of the constraints if the scale and the interval is not too large. Homotopy methods \cite{Bates2013,Sommese2005} also can help to find at least one point from each component if the constraints are polynomials.

\section{Embedding Method for S-unamenable {\IDAE}s}\label{sec:implictit method}

There are some S-unamenable {\IDAE}s from applications, such as the {\IDAE} of a PID controller \cite{Visioli2003}. The $\Sigma$-method may also fail by producing a singular Jacobian, while these cases may be solvable. In this section, we give an improved structural method to regularize S-unamenable {\IDAE}s which can erase the difficulties mentioned in section \ref{s:sa}.

In a similar manner to \cite{Yang2021}, we divide such systems into two types: systems with \textbf{symbolic cancellation} (see Example \ref{ex:Zolf}) and systems with \textbf{numerical degeneration} (see Example \ref{ex:Degeneration}).

\begin{example}\label{ex:Degeneration} Numerical Degeneration:


Belt-drive systems and chain-drive systems, are important parts of mechanical transmission systems which are widely used in high-tech industries such as automobiles and high-speed railways \cite{Zhang2022}. In a Similar manner to let-off and take-up systems \cite{Yang2013}, they not only implicitly require the coiling amount and the let-off amount to be equal in the whole process, but also implicitly require that their energies are equal which helps to improve fatigue strength and to avoid heating caused by deformation. Their dynamic simulation models can be described as follow:

\[\left\{\begin{array}{rcc}
J_1\cdot \dot{\Omega}_{1}(t)+J_2\cdot\dot{\Omega}_{2}(t)+K\cdot\int_{t_0}^{t}{\left(\Omega_{1}(s)-\Omega_{2}(s)\right)ds}\\+B\cdot\left(\Omega_{1}(t)-\Omega_{2}(t)\right) -T_1(t)+T_2(t)&=&0\\
\int_{t_0}^{t}{\left(J_1\cdot(\Omega_{1}(s))^2-J_2\cdot(\Omega_{2}(s))^2 \right)ds}&=&0
\end{array}\right.\]
\[
\Rightarrow \bm{\Jac}=\left(\begin{array}{cc}
J_1&J_2\\
2\cdot J_1\cdot\Omega_{1}& -2\cdot J_2\cdot\Omega_{2}
\end{array} \right)\]
\begin{figure}[htpb]
  \centering
  \includegraphics[height=3.5cm,width=8cm]{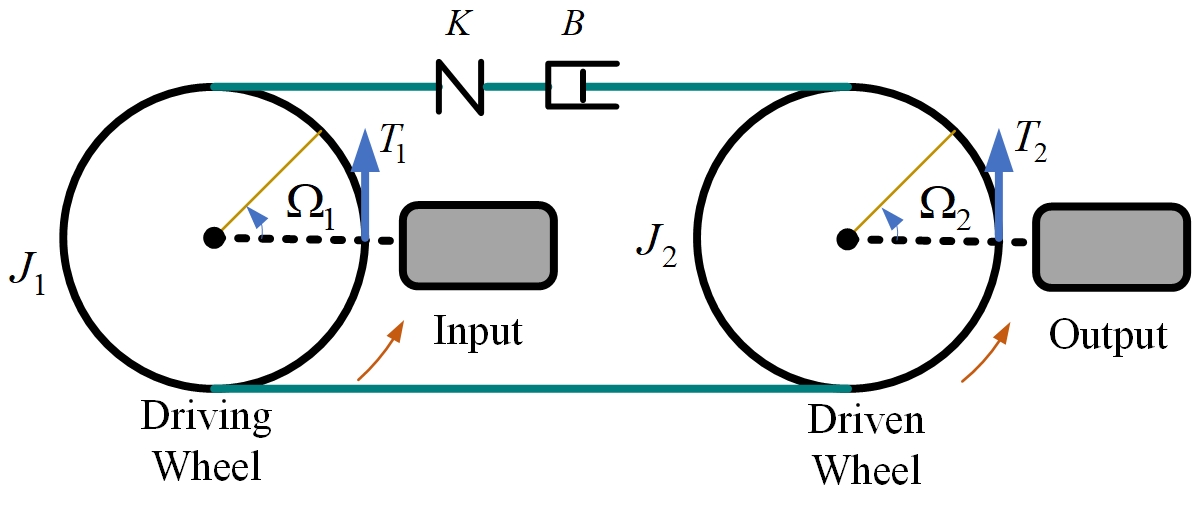}
   \caption{Belt-Drive System }
  \label{fig:loom}
\end{figure}

Here, $J_1$ and $J_2$ are moments of inertia of the wheels, $K$ is a given constant of  elasticity coefficient, $B$ is a given constant damping coefficient, $T_1(t)$ and $T_2(t)$ are given torques, and $\Omega_1(t)$ and $\Omega_2(t)$ are angular velocities of the wheels, respectively. When the transmission ratio is equal, the moments of inertia of the driving wheel and the driven wheel are the same,
 that is $J_1=J_2=J$.

 In this example, the determinant of the Jacobian matrix is $-2\cdot J^2 \cdot (\Omega_1+\Omega_2)$. Since $J\cdot((\Omega_1)^2-(\Omega_2)^2)=(\Omega_1-\Omega_2)\cdot(\Omega_1+\Omega_2)\cdot J$ in the constraints, two consistent initial points can be selected from the two different components, respectively. If the point is on the component $\Omega_1-\Omega_2 = 0$, then
the $\Sigma$-method works well. But for any initial point on the component $\Omega_1+\Omega_2 = 0$, we always encounter a singular Jacobian, and we call this case numerical degeneration.


\end{example}

Especially, linear recombination or high multiplicity for an S-amenable {\IDAE} may also lead to the singularity of the Jacobian matrix. The singularity caused by linear recombination belongs to the case of symbolic
cancellation since the row vectors of the Jacobian matrix are linearly related. The singularity caused by high multiplicity belongs to the case of numerical degeneration since some constraints are ideals of the determinant of the Jacobian matrix.


Based on the definition of {\DOF} in section \ref{ss:dof}, our scheme for improved structural methods for an {\IDAE} can be described as: a scheme to construct a new {\IDAE} $\bm{G}$, whose solution of $\bm{x}$ is the same as {\IDAE} $\bm{F}$ and $0\leq DOF(\bm{G}) < DOF(\bm{F})$.

If we have an initial point from a component of an {\IDAE}, then according to Lemma \ref{lem:whole}, the rank of the Jacobian matrix
 is constant almost everywhere on this component. It can be calculated by singular value decomposition (SVD).  Moreover, the Jacobians with constant rank enable us to embed the zero set into a higher dimensional space. Hence, the embedding method is proposed in \cite{Yang2021} to construct a new {\DAE} $\bm{G}$ by decreasing the optimal value. But the embedding method for S-unamenable {\IDAE}s is invalid due to a negative optimal value which should be replaced by {\DOF}.

\subsection{Extension of the Embedding Method}\label{SS:exten}

 Unlike other improved structural methods, the embedding method only replaces the variables of the top block.

\begin{cor}\label{col:2}
 The highest derivative of the top block $\bm{X}^{k_d}$ has no integral element.
\end{cor}
\begin{proof}
 The variables of the top block $\bm{X}^{k_d}= \{x_{1}^{(d_{1})},\cdots,x_n^{(d_n)}\}$. Since $\bm{d} = (d_1, \cdots, d_n) \geq \bm{0}$ is a constraint of the optimization problem ({\ref{LPP}}),  $\bm{X}^{k_d}$ is only related to $\bm{x}$ and its  derivatives.
    \foorp
\end{proof}

 Suppose $Z_{\R}(\bm{F}^{(\bm{c})})$ has constant rank i.e.
\begin{equation}\label{eq:rank}
\rank \bm{\Jac} = r =\rank \bm{\Jac}[1:r,1:r]  < n
\end{equation}
over a smooth component $C$ of $Z_{\R}(\bm{F}^{(\bm{c-1})})$.



Suppose $(\bm{c},\bm{d})$ is the optimal solution of Problem (\ref{LPP}) for a given {\IDAE} $\bm{F}$. Then differentiated {\IDAE} $\bm{F}^{(\bm{c})} = \{  \bm{B}_{k_c},\bm{F}^{(\bm{c}-1)}\}$ has constant rank $\rank \bm{\Jac} = r < n$. Let $\bm{s}= (x_1^{(d_1)}, ..., x_r^{(d_r)})$, $\bm{y}= (x_{r+1}^{(d_{r+1})},...,x_n^{(d_n)})$ and $\bm{z}= (t,\bm{X},\bm{X}^{(1)},...,\bm{X}^{(k_d-1)})$. Then $\bm{B}_{k_c}=\{ \bm{f(s,y,z)},\bm{g(s,y,z)}\}$, where $\bm{f(s,y,z)}=\{F_1^{(c_1)},..., F_r^{(c_r)}\}$ with full rank Jacobian and $\bm{g(s,y,z)}=\{F_{r+1}^{(c_{r+1})},..., F_n^{(c_n)}\}$. Then $\bm{G}$ is constructed by the embedding method in Algorithm \ref{alg:2}.

\begin{algorithm}
\caption{The Embedding Method}\label{alg:2}
\begin{algorithmic}[1]
\Procedure{Embedding}{}    
    \State Introduce $n$ new equations $\bm{\hat{F}(s,y,z)}:=\bm{B}_{k_c}$.
    \State to replace $\bm{s}$ in the top block $\hat{\bm{F}}$ by $r$ new dependent variables $\bm{u}=(u_1,...,u_r)$  respectively.
    \State to replace $\bm{y}$ in the top block $\hat{\bm{F}}$ by $n-r$ random constants $\bm{\xi}=(\xi_1,...,\xi_{n-r})$ respectively.
   \State Construct a new square subsystem
        \begin{equation}\label{eq:newsys}
        \bm{F}^{aug} := \{ \bm{f(s,y,z)},\bm{\hat{F}(u,\xi,z)}\}.
        \end{equation}
        \Comment{Where $\bm{F}^{aug}$ has $n+r$ equations with $n+r$ leading variables $\{\bm{X}^{(k_d)}, \bm{u}\}$ and $\bm{X}^{(k_d)}=\{\bm{s},\bm{y}\}.$
    \State Construct $\bm{G} := \{\bm{F}^{aug} ,\bm{F}^{(\bm{c}-1)}\}$}.
    \State Return $\bm{G}$.
\EndProcedure
\end{algorithmic}
\end{algorithm}

Most preferably, the initial values of the new variables $\bm{u}$ can simply be taken as the initial values of their replaced variables $\bm{s}$. And $\bm{\xi}$ takes the same initial value as was assigned to $\bm{y}$.

\begin{theorem}\label{thm:result}
Let $(\bm{c},\bm{d})$ be the optimal solution of Problem (\ref{LPP}) for a given {\IDAE} $\bm{F}$.
Let $\bm{F}^{(\bm{c})} = \{ \bm{B}_{k_c}, \bm{F}^{(\bm{c}-1)} \}$ as defined in Equation (\ref{eq:B_i}). If $\bm{F}^{(\bm{c})}$ satisfies (\ref{eq:rank}), and $C$ is a component of constraints, then
$$Z_{\R}(\bm{F}^{(\bm{c})})\cap C = \pi Z_{\R}(\bm{G}) \cap C$$
where $\bm{G}= \{\bm{F}^{aug},\bm{F}^{(\bm{c}-1)} \}$ as defined in Algorithm \ref{alg:2}.   
 Moreover, we have $DOF(\bm{G}) \leq  DOF(\bm{F}) - (n-r)$.
\end{theorem}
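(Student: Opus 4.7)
The theorem asserts two things: the variety-level equality $Z_{\R}(\bm{F}^{(\bm{c})}) \cap C = \pi Z_{\R}(\bm{G}) \cap C$, and the DOF bound $DOF(\bm{G}) \leq DOF(\bm{F}) - (n-r)$. My plan is to establish the set equality by mutual inclusion, leaning on the constant-rank hypothesis (\ref{eq:rank}) together with the implicit function theorem and a functional-dependence argument, and then derive the DOF bound by combining Proposition \ref{prop:DOF} with the observation that the constant-rank redundancy inside $\bm{G}$ manifests itself as hidden constraints.

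For the forward inclusion $\subseteq$, fix $p = (\bm{s}^*, \bm{y}^*, \bm{z}^*) \in Z_{\R}(\bm{F}^{(\bm{c})}) \cap C$. By hypothesis $\bm{F}^{(\bm{c}-1)}(p) = \bm{0}$ and $\bm{f}(\bm{s}^*, \bm{y}^*, \bm{z}^*) = \bm{0}$ already hold, so the remaining task is to exhibit $\bm{u}^*$ satisfying $\bm{f}(\bm{u}^*, \bm{\xi}, \bm{z}^*) = \bm{0}$ and $\bm{g}(\bm{u}^*, \bm{\xi}, \bm{z}^*) = \bm{0}$. Nonsingularity of the $r \times r$ minor $\partial \bm{f}/\partial \bm{s}$ on $C$, together with the implicit function theorem, produces a $\bm{u}^*$ near $\bm{s}^*$ with $\bm{f}(\bm{u}^*, \bm{\xi}, \bm{z}^*) = \bm{0}$ (the paper's convention of taking $\bm{\xi}$ as the initial value of $\bm{y}$ keeps us on a nearby branch). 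The remaining $n-r$ equations come for free from the constant-rank theorem: since the last $n-r$ rows of $\bm{\Jac}$ lie in the row-span of the first $r$ throughout $C$, the vector $\bm{g}$ is locally a composition through $\bm{f}$ and the other coordinates, so $\bm{f} = \bm{0}$ at $(\bm{u}^*, \bm{\xi}, \bm{z}^*)$ forces $\bm{g} = \bm{0}$ there after propagating the base value $\bm{g}(\bm{s}^*, \bm{y}^*, \bm{z}^*) = \bm{0}$ along $C$ by connectedness. The reverse inclusion $\supseteq$ uses the same mechanism applied to the $(\bm{s}, \bm{y})$-copy: any $p$ with $(\bm{u}^*, p) \in Z_{\R}(\bm{G})$ and $p \in C$ automatically satisfies $\bm{g}(\bm{s}^*, \bm{y}^*, \bm{z}^*) = \bm{0}$ by the same functional-dependence argument, so $p \in Z_{\R}(\bm{F}^{(\bm{c})})$.

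For the DOF bound I would apply Proposition \ref{prop:DOF} to both $\bm{F}$ and $\bm{G}$. Because the new variables $\bm{u}$ never appear under an integral sign, the integral-smoothing sum $\sum_j \max_i \omega_{i,j}$ is identical for both systems, so the claim reduces to $\delta(\bm{G}) \leq \delta(\bm{F}) - (n-r)$. Bare equation/variable counting gives a net change of $+r$ in equations (removing the $n-r$ equations $\bm{g}(\bm{s}, \bm{y}, \bm{z})$ and adding the $n$ equations of $\hat{\bm{F}}$) and $+r$ in variables (the $\bm{u}$-block), which alone would leave the ILP optimum unchanged. The extra drop of $n-r$ is precisely the constant-rank redundancy made explicit: the $n-r$ equations $\bm{g}(\bm{u}, \bm{\xi}, \bm{z}) = \bm{0}$ inside $\hat{\bm{F}}$ are functionally implied by $\bm{f}(\bm{u}, \bm{\xi}, \bm{z}) = \bm{0}$ on $C$, so they act as hidden constraints on $\bm{z}$ and, once absorbed into the ILP dual, lower $\delta(\bm{G})$ by at least $n-r$.

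The subtlest step is the functional-dependence claim that underpins both the set equality and the DOF drop: that under the constant-rank hypothesis the vanishing of $\bm{f}$ compels the vanishing of $\bm{g}$ even after the substitution $\bm{s} \to \bm{u}$, $\bm{y} \to \bm{\xi}$ with $\bm{\xi}$ a constant vector a priori unrelated to $C$. Handling this requires a global version of the constant-rank theorem along the connected component $C$, and the choice of $\bm{\xi}$ as an initial value of $\bm{y}$ on $C$ is exactly what keeps the implicit-function branch issued from the reference point inside the regime where the functional dependence remains valid.
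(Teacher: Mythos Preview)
Your argument for the set equality is essentially the approach of \cite{Yang2021} to which the paper defers: an implicit-function/constant-rank argument showing that $\hat{\bm{F}}$ can be solved for $\bm{u}$ and that the result satisfies $\bm{g}$ by functional dependence. You correctly flag the delicate point (why the dependence survives the substitution $\bm{y}\to\bm{\xi}$), though you do not fully resolve it.

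The DOF argument, however, has a genuine gap. You attribute the drop of $n-r$ to the \emph{functional} redundancy of the equations $\bm{g}(\bm{u},\bm{\xi},\bm{z})$, claiming that once ``absorbed into the ILP dual'' they lower $\delta(\bm{G})$. But $\delta$ is computed purely from the signature matrix---it records only which derivative orders appear where and is blind to whether equations are functionally independent; the constant-rank hypothesis plays no role in this inequality. The paper's mechanism is entirely structural: the substitutions $\bm{s}\to\bm{u}$ and $\bm{y}\to\bm{\xi}$ delete the top-order derivatives $x_j^{(d_j)}$ from all $n$ equations of $\hat{\bm{F}}$, so every signature entry of $\hat{\bm{F}}$ in the original variables drops by at least one. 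One then exhibits an explicit feasible dual pair for $\bm{F}^{aug}$, namely $\bar c_i=0$ on the first $r$ rows, $\bar c_i=1$ on $\hat{\bm{F}}$, $\bar d_j=d_j$ on the old variables and $\bar d_j=1$ on $\bm{u}$, whose objective value is $\sum_j d_j + r - n = \delta(\bm{B}_{k_c})-(n-r)$. Proposition~\ref{define_delta1} then transfers this to $\bm{G}$ versus $\bm{F}^{(\bm{c})}$ since both share the block $\bm{F}^{(\bm{c}-1)}$, and Proposition~\ref{prop:DOF} converts $\delta$ to DOF since (as you correctly note) the $\omega$-sum is unchanged.
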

\begin{proof}
Just like the proof of Theorem $4.3$ in \cite{Yang2021}, since
$\bm{F}^{(\bm{c}-1)}$ is common to
both ${\bm{F}^{(\bm{c})}}$ and ${\bm{G}}$, we have $Z_{\R}(\bm{F}^{(\bm{c})})\cap C = \pi Z_{\R}(\bm{G}) \cap C$.

We construct a pair $(\hat{\bm{c}},\hat{\bm{d})}$, for $i=1, \cdots, n$, $\hat{c}_{i}=0$ and for $ j=1, \cdots, n$, $\hat{d}_{j}= d_{j}$. Since $(\bm{c},\bm{d})$ is the optimal solution for $\bm{F}$, and $\bm{B}_{k_c}$ is the top block of $\bm{F}^{(\bm{c})}$, it follows that $(\hat{\bm{c}},\hat{\bm{d})}$ is the optimal solution for $\bm{B}_{k_c}$, and $\delta(\bm{B}_{k_c})= \sum\limits_{j}{d_j}$.

In the same manner of \cite{Yang2021}, we also construct a pair of feasible solutions $(\bar{\bm{c}},\bar{\bm{d})}$ for $\bm{F}^{aug}$, which can help us to obtain
$\delta(\bm{F}^{aug})\leq \delta(\bm{B}_{k_c}) -(n-r)$. Where
\begin{equation}\label{eq:feasible_sln}
 \begin{array}{ll}
 \bar{c}_{i}= \left\{%
\begin{array}{ll}
    0,& i=1,\cdots,r; \\
    1,& i=(r+1),\cdots,(n+r). \\
\end{array}%
\right.
\\
\bar{d}_{j}= \left\{%
\begin{array}{ll}
    d_{j},& j=1,\cdots,n; \\
    1,& j=(n+1),\cdots,(n+r). \\
\end{array}%
\right.
\end{array}
\end{equation}

According to Corollary \ref{col:2}, the replaced variables only occur in $\bm{X}^{k_d}$ in the top block, thus $\bm{F}^{aug}$ and $\bm{B}_{k_c}$ have the same integration variables. By Proposition \ref{prop:DOF}, such that $DOF(\bm{F}^{aug}) \leq DOF(\bm{B}_{k_c}) -(n-r)$.

Obviously, since both ${\bm{F}^{(\bm{c})}}$ and ${\bm{G}}$ have the same
block of constraints $\bm{F}^{(\bm{c}-1)}$, according to Proposition {\ref{define_delta1}}, it follows that $DOF({\bm{G}})-DOF({\bm{F}^{(\bm{c})}})=DOF(\bm{F}^{aug})-DOF(\bm{B}_{k_c})\leq -(n-r)$. Finally,  $DOF(\bm{G}) \leq DOF(\bm{F}^{(\bm{c})}) - (n-r)= DOF(\bm{F}) - (n-r)$, since $DOF(\bm{F})=DOF(\bm{F}^{(\bm{c})})$ by Proposition \ref{prop:DOF}. \foorp \\
\end{proof}

Moreover, since the embedding method only replaces derivatives for $\bm{d}\geq \bm{0}$,  the $\omega_{i,j}$-integral of variables in $\bm F$ is the same as the $\omega_{i,j}$-integral of variables in $\bm G$.

Although there are more dependent variables in $\bm{G}$,
the computational cost is much lower than explicit symbolic elimination, since $\bm{G}$ and the corresponding lifted initial points can be easily constructed.
Theoretically, Lemma {\ref{lem:lifting}} below
shows that the feasible solution $(\bar{\bm{c}},\bar{\bm{d})}$ is optimal under some reasonable assumptions.

\begin{lemma}\label{lem:lifting}
Suppose each equation $F_{i}$ in the top block $\bm{B}_{k_c}$ of a {\IDAE} $\bm F$
contains at least one variable  $x_{j}\in \bm{X}^{(k_d)-1}$.  If $\bm F$ is also a perfect match, then $(\bar{\bm{c}},\bar{\bm{d})}$ in Equation (\ref{eq:feasible_sln})
is an optimal solution, and $DOF(\bm{G}) = DOF(\bm{F}) - (n-r)$.
\end{lemma}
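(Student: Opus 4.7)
\smallskip\noindent\emph{Proof plan.} The strategy is to upgrade the inequality obtained in Theorem~\ref{thm:result} to an equality by showing that the feasible dual certificate $(\bar{\bm c}, \bar{\bm d})$ constructed in its proof is in fact optimal for the {\ILP} (\ref{LPP}) attached to $\bm F^{aug}$. Since (\ref{LPP}) is totally unimodular and dual (via Birkhoff--von~Neumann) to the maximum-weight perfect matching problem on the bipartite graph whose weights are the entries of $\sigma(\bm F^{aug})$, optimality of $(\bar{\bm c}, \bar{\bm d})$ is equivalent to the existence of a perfect matching $\Pi$ of $\bm F^{aug}$ of total weight $\sum_{j=1}^{n} d_j - (n-r)$. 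Feasibility of $(\bar{\bm c}, \bar{\bm d})$ already yields the upper bound $\delta(\bm F^{aug}) \leq \sum_j d_j - (n-r)$, so the remaining task is to exhibit a matching attaining this value.

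I would construct $\Pi$ from three independent ingredients. First, since $\bm F$ is a perfect match, fix an {\HVT} $\pi_F$ of $\bm F$ satisfying $\sigma_{i,\pi_F(i)}(\bm B_{k_c}) = d_{\pi_F(i)}$ for every $i$; this produces a weight-$d_{\pi_F(i)}$ edge from each original-$f_i$ row ($i\le r$) of $\bm F^{aug}$ to $\bm X$-column $\pi_F(i)$. Second, because $\bm{\Jac}[1{:}r,1{:}r]$ has full rank, the Leibniz expansion of its determinant supplies a permutation $\tau$ of $\{1,\dots,r\}$ with $\partial f_i/\partial x_{\tau(i)}^{(d_{\tau(i)})}\ne 0$, giving a weight-$0$ edge from each modified-$f_i$ row to $u_{\tau(i)}$. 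Third, the new hypothesis provides, for every $F_i$, a witness column $\chi(i)$ with $x_{\chi(i)}^{(d_{\chi(i)}-1)}$ appearing in $F_i$; after the IRE substitution (which strips only the top-order derivative), this yields an edge of weight exactly $d_{\chi(i)}-1$ from the modified-$F_i$ row to column $\chi(i)$. If each modified-$g_k$ row can be matched to a column $\pi_g(k)\in\bar A := \{1,\dots,n\}\setminus \pi_F(\{1,\dots,r\})$ of weight $d_{\pi_g(k)}-1$, the three blocks telescope to total weight $\sum_{i\le r}d_{\pi_F(i)} + 0 + \sum_k(d_{\pi_g(k)}-1) = \sum_j d_j - (n-r)$, as required.

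The main obstacle is precisely this third block: the witness $\chi(r+k)$ may fall in $A := \pi_F(\{1,\dots,r\})$ rather than in $\bar A$, and distinct modified-$g$ rows may share a witness, so $\chi|_{\{r+1,\dots,n\}}$ is not automatically a bijection onto $\bar A$. I plan to overcome this by a Hall/K\"onig type selection inside the bipartite subgraph between modified-$g$ rows and $\bar A$, together with augmenting-path swaps that exchange edges of $\pi_F$ with edges provided by $\chi$ to realign witnesses with $\bar A$; the perfect-match hypothesis on $\bm F$ is what supplies enough flexibility in the choice of the HVT $\pi_F$ to carry out these exchanges. Once the matching exists and $\delta(\bm F^{aug}) = \delta(\bm B_{k_c}) - (n-r)$ is established, the same counting argument as Proposition~\ref{define_delta1} (applied to $\delta$ rather than $DOF$) gives $\delta(\bm G) - \delta(\bm F^{(\bm c)}) = -(n-r)$; since $\delta(\bm F^{(\bm c)}) = \delta(\bm F)$ and the IRE modifies only the top block so the integral-smoothing orders $\omega_{i,j}$ are unchanged between $\bm F$ and $\bm G$, Proposition~\ref{prop:DOF} then promotes this to $DOF(\bm G) = DOF(\bm F) - (n-r)$.
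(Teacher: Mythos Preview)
Your route is genuinely different from the paper's. The paper does not give a self-contained argument: it records only that the lemma ``is proved by contradiction'' and defers the details to Lemma~4.4 of \cite{Yang2021}. You instead argue on the primal side of the assignment {\ILP}, exhibiting a perfect matching of weight $\sum_j d_j-(n-r)$ on the tight-edge graph of $\sigma(\bm F^{aug})$; by strong duality this certifies optimality of $(\bar{\bm c},\bar{\bm d})$ directly. Your constructive scheme also makes transparent which hypothesis does which job: the full-rank submatrix $\bm{\Jac}[1{:}r,1{:}r]$ supplies the $u$-block matching via a Leibniz term, the perfect-match hypothesis supplies the $f$-block via an {\HVT} $\pi_F$, and the assumption that each equation of $\bm B_{k_c}$ meets $\bm X^{(k_d-1)}$ supplies the tight edges of weight $d_j-1$ needed for the remaining $g$-block. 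Translating back to $DOF$ via Proposition~\ref{prop:DOF} and the invariance of the $\omega_{i,j}$ under IRE is exactly right. The contradiction argument the paper points to is more compact but less illuminating about where the $\bm X^{(k_d-1)}$ hypothesis is actually consumed.

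The one place your plan is not yet a proof is the step you flag yourself: matching the modified-$g$ rows into $\bar A=\{1,\dots,n\}\setminus\pi_F(\{1,\dots,r\})$. The hypothesis gives each such row \emph{some} tight column $\chi(r+k)\in\{1,\dots,n\}$, but not a system of distinct representatives inside $\bar A$, and it is not automatic that augmenting-path swaps through $\pi_F$ repair this; you must verify Hall's condition on the full tight-edge bipartite graph of $\bm F^{aug}$, not just on the $g$-vs-$\bar A$ subgraph. Concretely, you should also use that every modified row $\hat F_i$ retains a tight edge to a $u$-column whenever the original $F_i^{(c_i)}$ depends on some $x_l^{(d_l)}$ with $l\le r$, and combine this with the $\bm X^{(k_d-1)}$ edges to show no Hall-violating set of columns exists. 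Once that verification is written out, your argument is complete and gives a pleasant alternative to the paper's referenced contradiction proof.
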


 This lemma is proved by contradiction. For more detail please see the proof of Lemma $4.4$ in \cite{Yang2021}.

\subsection{Examples}\label{ssec:exam}
\begin{example}{(Symbolic Cancellation)}\label{Zolf_3}
    According to Example {\ref{ex:Zolf}} and Example {\ref{ex:Zolf_2}}, this {\IDAE} is a typical example of symbolic cancellation.

Obviously, we still cannot solve the system directly after the $\Sigma$ method. Fortunately, as shown in \cite{ZOLF2021}, the ES-method can successfully regularize it, while the LC-method fails.

Here, we apply the embedding method to this example. According to Algorithm \ref{alg:2}, we have $\bm{s}=\{{x}_{1}\}$,$\bm{y}=\{{x}_{2}\}$, $\bm{f(s,y,z)}=\{F^{(1)}_{2}\}$, $\bm{g(s,y,z)}=\{F_1\}$, $\bm{F}^{(\bm{c}-1)}=\{F_2\}$.
Thus, $\hat{\bm{F}}=\{\bm{f(u,\xi,z)},\bm{g(u,\xi,z)}\}$,
where $\bm{s}$ and $\bm{y}$ are replaced by $u$ and some random constants $\xi$ respectively. Thus,

\[{\bm{F}^{aug}}=\left\{\begin{array}{l}
x_1(t)+x_2(t)+\int_{t_0}^{t}{ x_1(s)\cdot x_2(s)ds} - \dot{g}_2(t)\\
e^{-u(t)-\xi}-g_1(t)\\
u(t)+\xi+\int_{t_0}^{t}{ x_1(s)\cdot x_2(s)ds} - \dot{g}_2(t)
\end{array}\right.\]

After executing the embedding method, directly construct  $\bar{\bm{c}}=(0,1,1)$ and $\bar{\bm{d}}=(0,0,1)$ by Lemma \ref{lem:lifting}. Actually, it also is the optimal solution of {\AP} by calculation. And the {\DOF} of the new system $\bm{G}$ is $\sum{\bar{d}_{j}}-\sum{\bar{c}_{i}}+\sum\limits_{j}{\max\limits_{i}{\omega_{i,j}}}-\#eqns(\bm{F}^{(\bm{c}-1)})=1-2+4-1 =DOF(\bm{F})-n+r = 3-2+1$, which is the same as the {\DOF} after the ES-method.

Then we can verify that the determinant of the new Jacobian matrix is $(x_2-x_1)\cdot e^{u-\xi}$, which is non-singular at $t_0$ if $x_1(t_0)-x_2(t_0) \neq 0$, which is the same result as in \cite{ZOLF2021}.
\end{example}

\begin{example}{(Numerical Degeneration)}\label{ex:2}
Consider the following {\IDAE} with dependent variables $x \left( t \right)$ and $y \left( t \right)$:
\[
\bm{F} = \left\{2\,y {\frac {{\rm d^{2}}x}{{\rm d}t^{2}}}  - x
  {\frac {{\rm d^{2}}y }{{\rm d}t^{2}}} +2x
 \left({\frac {{\rm d}x}{{\rm d}t}} \right)^{2} - {\frac {{\rm d} x }{{\rm d}t}} +\sin \left( t \right) , \int_{0}^{t} {\left(y\left( s \right) -  x\left( s \right)^2\right)}ds \right\}.
\]

 The exact solution of this {\IDAE} is $x(t)=C-\cos(t)$ and $y(t)=x(t)^2$. Applying the structural method yields $\bm{c}=(0,3)$ and $\bm{d}=(2,2)$. Then $\bm{F}^{(\bm{c})}= [\{2y x_{tt}-x y_{tt} +2x {x_t}^{2}-x_t+\sin(t), y_{tt}-2x_t^{2}-2xx_{tt}\}, \{-2x x_t+y_t\}, \{-x^2+y\},\{\int{\left(-x^2+y\right)}ds\}]$, and the Jacobian matrix of the top block is
  $\bm{\Jac} =
      \left(
        \begin{array}{cc}
          2y & -x \\
          -2x & 1 \\
        \end{array}
      \right)$.

 Although the determinant of the Jacobian $2y-2x^2$ is not identically zero, it must equal zero at any initial point since
the determinant belongs to the ideal generated by the hidden constraints. Thus,

\[\bm{F}^{aug}=\left\{\begin{array}{rcc}
2\,y {\frac {{\rm d^{2}}x}{{\rm d}t^{2}}}  - x
  {\frac {{\rm d^{2}}y }{{\rm d}t^{2}}} +2x
 \left({\frac {{\rm d}x}{{\rm d}t}} \right)^{2} - {\frac {{\rm d} x }{{\rm d}t}} +\sin \left( t \right)&=&0\\
2\cdot u_{1}\cdot y-\xi\cdot x+2x
\left({\frac {{\rm d}x}{{\rm d}t}} \right)^{2} - {\frac {{\rm d}x}{{\rm d}t}} +\sin(t)&=&0\\
\xi-2\cdot  u_{1}\cdot  x-2\cdot\left({\frac {{\rm d}x}{{\rm d}t}} \right)^{2}&=&0
\end{array}\right.\]

After the embedding method with $\bm{s}=\{{\frac {{\rm d^{2}}x}{{\rm d}t^{2}}}\}$, $\bm{y}=\{{\frac {{\rm d^{2}}y}{{\rm d}t^{2}}}\}$, $\bm{f(s,y,z)}=\{F_{1}\}$, $\bm{g(s,y,z)}=\{F^{(3)}_2\}$, $\bar{\bm{c}}=(0,1,1)$ and $\bar{\bm{d}}=(2,2,1)$, the new Jacobian matrix of $\bm{F}^{aug}$ is
$$\bm{\Jac} =
      \left(
        \begin{array}{ccc}
         2y  &-1&0\\
          4x\cdot x_t -1 & 0 & 2y \\
          -4x_t & 0& -2x \\
        \end{array}
      \right)$$

It is obvious that the determinant of the new Jacobian matrix will not degenerate to a singular matrix by virtue of the constraints.

It should be noted that there is a redundant constraint in this example which will affect the {\DOF}.

\end{example}



\begin{example}{(Hybrid System)}\label{ex:3}
Consider an S-amenable {\IDAE} with exact solution $x_1 \left( t \right) = 1 - 2t$ and $x_2\left( t \right)=4t$:
\[\bm{F} = \left\{\begin{array}{ll} F_1: & t\,x_1\left( t \right)+ \int_{0}^{t} { x_2\left( s \right)} ds - t\\
F_2: & \int_{0}^{t}{x_1\left( s \right)} ds + t^{2} - t
\end{array}
\right.
\]

Whose signature matrix is
$$[\sigma_{i,j}](\bm{F})=\begin{array}{ccc}
 & & c_i\\
 & \left(\begin{array}{cc}
   0 & -1\\
    \color{blue}\bm{-1} & \color{blue}\bm{-\infty}
 \end{array}\right) & \begin{array}{c}
   1\\
   \color{blue}\bm{2}
 \end{array}\\ d_j&\begin{array}{cc}
   1 & 0
 \end{array} &
\end{array} $$
and {\DOF} is $0$.

When linear recombination and high multiplicity occur in it such as $\bm{\bar{F}} =\left\{ F_1^{2}, F_1 - F_2\right\} $, it turns out to be an S-unamenable {\IDAE}.

By the $\Sigma$-method, the signature matrix

$$[\sigma_{i,j}](\bm{\bar{F}})= \begin{array}{ccc}
 & & c_i\\
 & \left(\begin{array}{cc}
   0 & -1\\
    \color{blue}\bm{0} & \color{blue}\bm{-1}
 \end{array}\right) & \begin{array}{c}
   1\\
   \color{blue}\bm{1}
 \end{array}\\ d_j&\begin{array}{cc}
   1 & 0
 \end{array} &
\end{array}.$$
Here, {\DOF} is $2$  with $1$ redundant constraint, and Jacobian matrix is $\bm{\Jac} =
      \left(
        \begin{array}{cc}
         2\cdot t\cdot F_1  & 2\cdot F_1\\
         t & 1
        \end{array}
      \right)$ is identical zero, whose rank is $1$.

In this example, we need to call the embedding method twice to find $2$ hidden constraints. At the first call, ${s}=\{{\frac {{\rm d}x_1}{{\rm d}t}}\}$, ${y}=\{x_2\}$, ${f(s,y,z)}=\{\bar{F}'_{2}\}$, ${g(s,y,z)}=\{\bar{F}'_{1}\}$, $\bar{\bm{c}}=(0,1,1)$ and $\bar{\bm{d}}=(1,0,1)$. The Jacobian matrix of the new subsystem $\bm{\bar{F}}^{aug}$ is
$\bm{\Jac} =
      \left(
        \begin{array}{ccc}
         t  & 1 & 0\\
          2\cdot t\cdot (\bar{F}'_1(u,\xi,z)+\bar{F}_1)& 2\cdot\bar{F}'_1(u,\xi,z) & 2\cdot t\cdot \bar{F}_1 \\
          0 & 0 & t \\
        \end{array}
      \right)$, which implies that the new system has numerical degeneration.
      At the second call of the embedding method,  $\bm{s}=\{{\frac {{\rm d}x_1}{{\rm d}t}},{\frac {{\rm d}u}{{\rm d}t}}\}$, ${y}=\{x_2\}$, $\bm{f(s,y,z)}=\{{\bar{F}}^{aug}_{1},{\bar{F}}^{aug}_{3}\}$, and ${g(s,y,z)}=\{{\bar{F}}^{aug}_{2}\}$. Finally, the Jacobian matrix is non-singular.
\end{example}

The above examples illustrate that the embedding method is a good choice for S-unamenable {\IDAE}s including those with linear recombination and high multiplicity cases.

\section{Global Numerical Solution of Two Stage Drive System}\label{s:global}

After structural analysis, a low-index {\IDAE} can be obtained which can be decoupled into a system of regular Volterra integro differential equations ({\VIDE})s and a system of second-kind Volterra integral equations ({\VIE})s \cite{Liang2019}. Generally, numerical solution methods for {\IDAE}s can be summarized in terms of two steps. The first step is to compute an initial value by {\VIE}s, and the second step is to solve a {\VIDE} using the initial value of the first step and to check whether the new solution conforms to {\VIE}s. Most studies have focused on the second step. Some numerical iterative formats for some typical {\IDAE} systems have been proposed. Implicit Runge-Kutta methods \cite{KAUTHEN1993}, collocation methods and collocation based methods \cite{Liang2019,brunner_2004}, implicit Euler methods and methods based on backward differentiation formulas \cite{Bulatov11,Bulatov06}. 



 The global numerical method in \cite{Yang2021} can be applied to {\IDAE}s directly. In particular, Homotopy continuation methods or interval methods can help to find all initial points. Next, we will give an example to illustrate the global numerical method in \cite{Yang2021}.  



\begin{figure}[htpb]
  \centering
  \includegraphics[height=7cm,width=12cm]{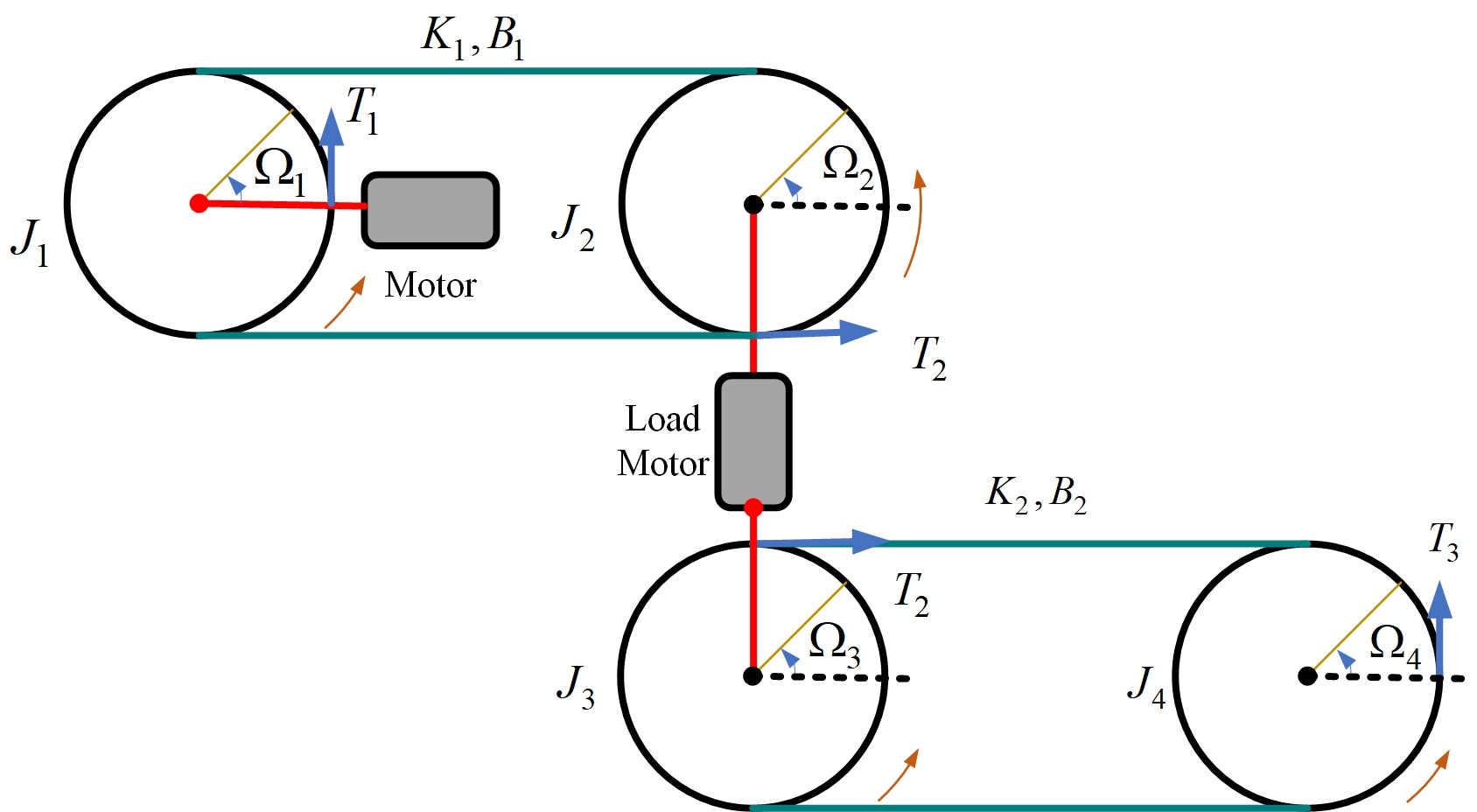}
   \caption{The Two Stage Drive System}
  \label{fig:trans}
\end{figure}

The specific description of one stage driven system is given in Example \ref{ex:Degeneration}. In applications, we can usually introduce a constant load in a series of one stage driven systems to achieve multi-stage transmission. When it comes to two stage drive system in Figure \ref{fig:trans}, it can be described as follows.

Assume moments of inertia $J_1=J_2=J_3=J_4=1$, elastic
coefficients $K_1=K_2=1$, damping coefficients $B_1=B_2=1$, and torques $T_1(t)=2$, $T_2(t)=-sin(t)$, $T_3(t)=1$, respectively.

\[\left\{\begin{array}{rcc}
\dot{\Omega}_{1}+\dot{\Omega}_{2}+\int_{0}^{t}{\left(\Omega_{1}-\Omega_{2}\right)ds}+\Omega_{1}-\Omega_{2}+2-sin(t)&=&0\\
\int_{0}^{t}{\left((\Omega_{1})^2-(\Omega_{2})^2 \right)ds}&=&0\\
\dot{\Omega}_{3}+\dot{\Omega}_{4}+\int_{0}^{t}{\left(\Omega_{3}-\Omega_{4}\right)ds}+\Omega_{3}(t)-\Omega_{4}(t)+sin(t)-1&=&0\\
\int_{0}^{t}{\left((\Omega_{3})^2-((\Omega_{4})^2 \right)ds}&=&0
\end{array}\right.\]



\[\Rightarrow \bm{\Jac}=\left(\begin{array}{cccc}
1&1&0&0\\
2\cdot\Omega_{1}& -2\cdot\Omega_{2}&0&0\\
0&0&1&1\\
0&0&2\cdot\Omega_{3}& -2\cdot \Omega_{4}\\
\end{array} \right)\]

Here, the two stage driven system is designed to be an equal transmission ratio system. It must be a numerically degenerate system with $4$ components in Table \ref{tab:COMP}.

 By structural analysis, the dual optimal solutions is $\bm{c}=(0,2,0,2)$ and $\bm{d}=(1,1,1,1)$. In this example, there are two separate equation blocks that we can deal with them by applying the embedding method separately to reduce complexity. We also can construct the optimal solutions of the new system $\bm{G}$ by Lemma \ref{lem:lifting}.

\begin{table}[htpb]
	\caption{Components of Two Stage Driven System  }\label{tab:COMP}
\centering
\begin{tabular}{|c|c|c|c|c|c|}
  \hline
  &Components & $\rank \bm{\Jac}$ &$\bm{f(s,y,z)}$ & $\bm{s}$& Method\\
  \hline
  (a) & $\Omega_{1}=\Omega_{2}$, $\Omega_{3}=\Omega_{4}$ & $4$ &&& $\Sigma$\\
   \hline
 (b) & $\Omega_{1}=-\Omega_{2}$, $\Omega_{3}=\Omega_{4}$ & $3$& $F_2$&$\dot{\Omega}_{1}$& embedding\\
   \hline
 (c) & $\Omega_{1}=\Omega_{2}$, $\Omega_{3}=-\Omega_{4}$ & $3$&$F_4$&$\dot{\Omega}_{3}$& embedding\\
 \hline
 (d) & $\Omega_{1}=-\Omega_{2}$, $\Omega_{3}=-\Omega_{4}$ & $2$&$F_2$,$F_4$&$\dot{\Omega}_{1}$,$\dot{\Omega}_{3}$& embedding\\
  \hline
\end{tabular}
\end{table}

When $t\in [0,5]$, four witness points from each component
 are computed by the Homotopy continuation method \cite{WWX2017} where each point has
 coordinates $(\Omega_{1}, \Omega_{2}, \Omega_{3}, \Omega_{4})$:
 \[
\begin{array}{rrrr}
(0.21862079540 & 0.21862079540 & -0.87716795773 & -0.87716795773 )\\
(-1.0000000000 & 1.0000000000 &  -0.87716795773 & -0.87716795773 )\\
(0.21862079540 & 0.21862079540 & 0.50000000000 & -0.50000000000) \\
(-1.0000000000 & 1.0000000000 & 0.50000000000 & -0.50000000000)\\
\end{array}
\]

These witness points are approximate points near the consistent initial value points, which need to be refined by Newton iteration. Finally, four numerical solutions from different components are shown in Figure \ref{fig:2sd}.

\begin{figure}[htbp]
\centering
\subfigure[]{
\includegraphics[width=6cm,height=4.5cm]{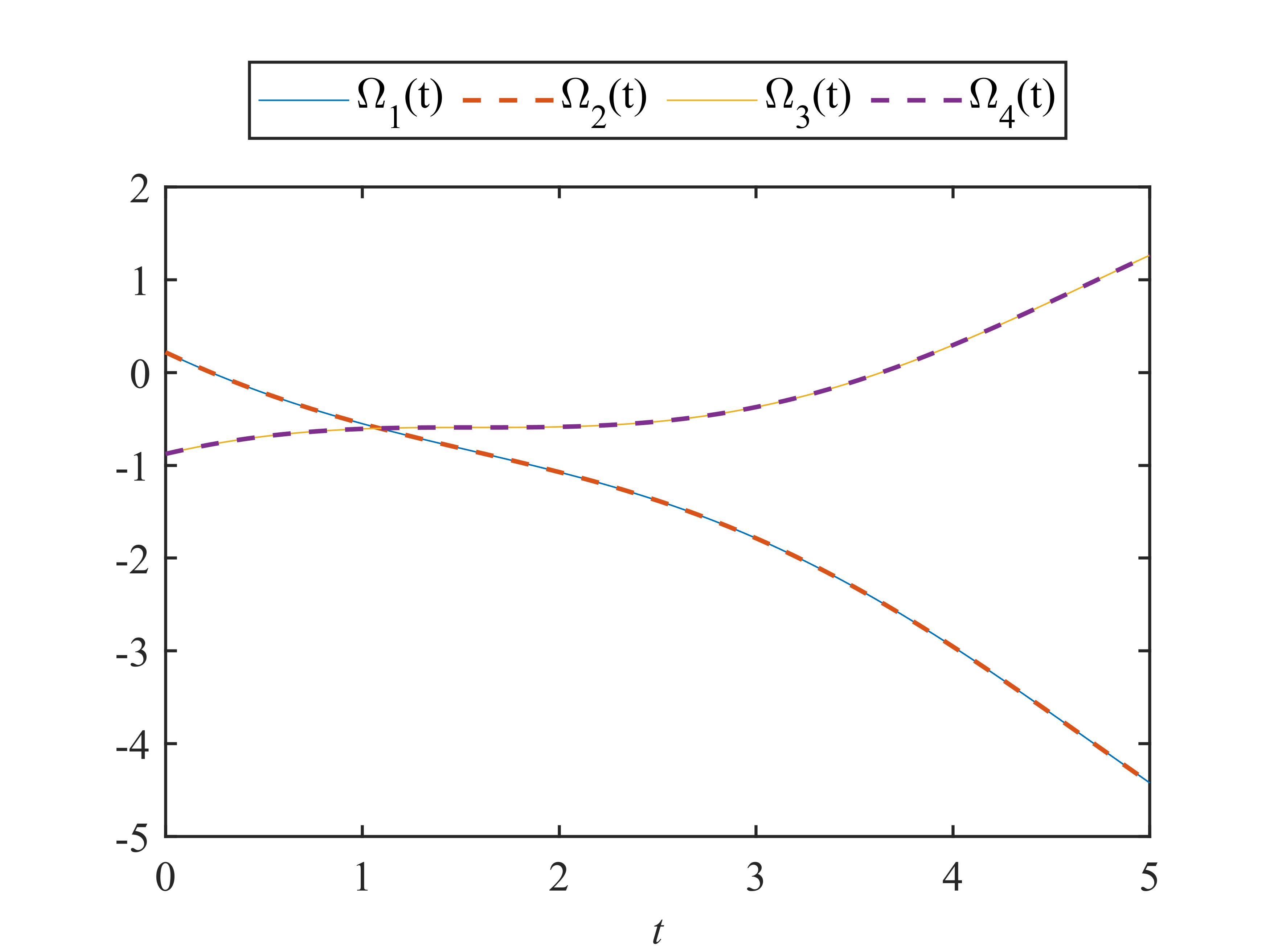}
}
\quad
\subfigure[]{
\includegraphics[width=6cm,height=4.5cm]{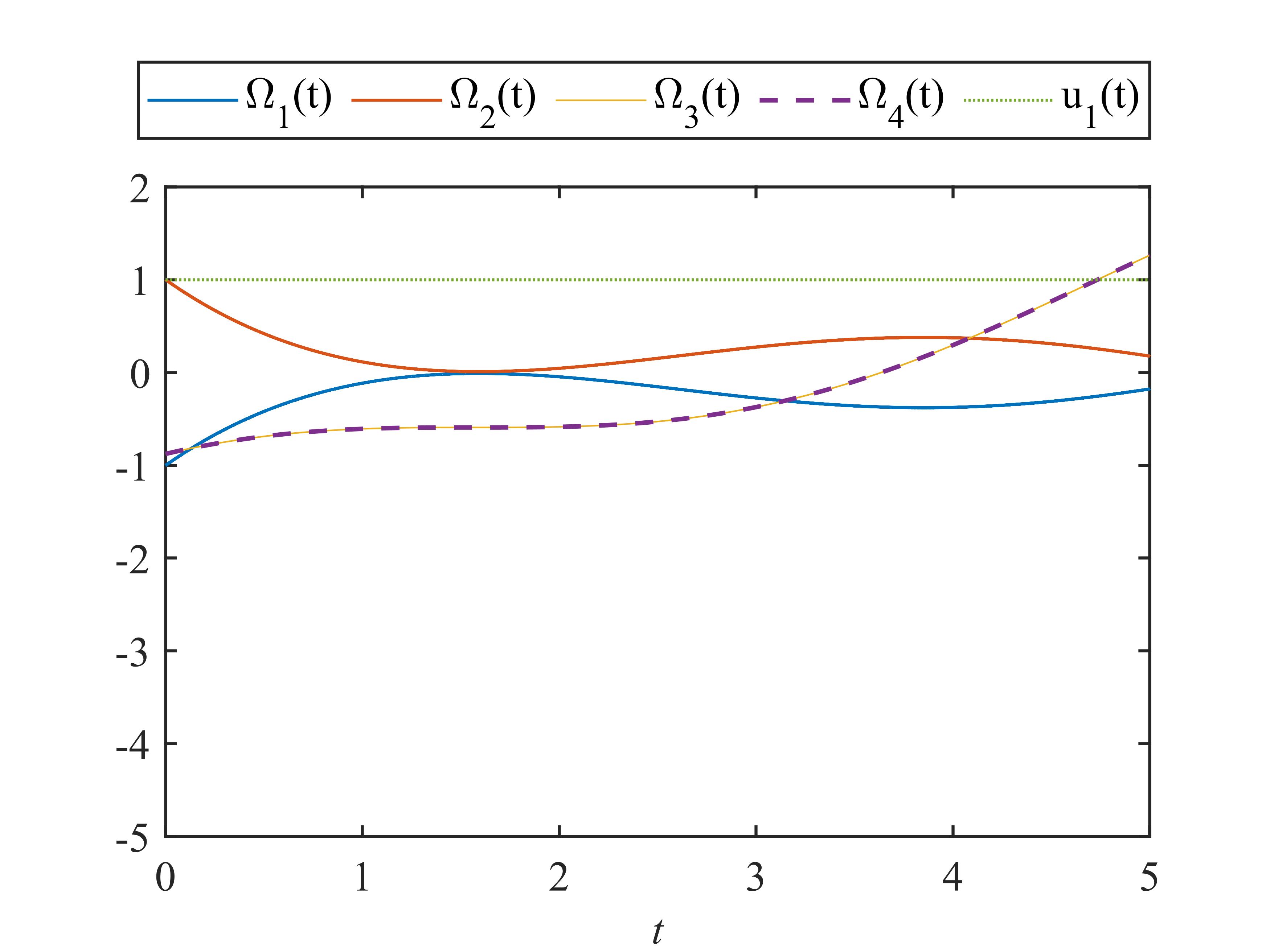}
}
\quad
\subfigure[]{
\includegraphics[width=6cm,height=4.5cm]{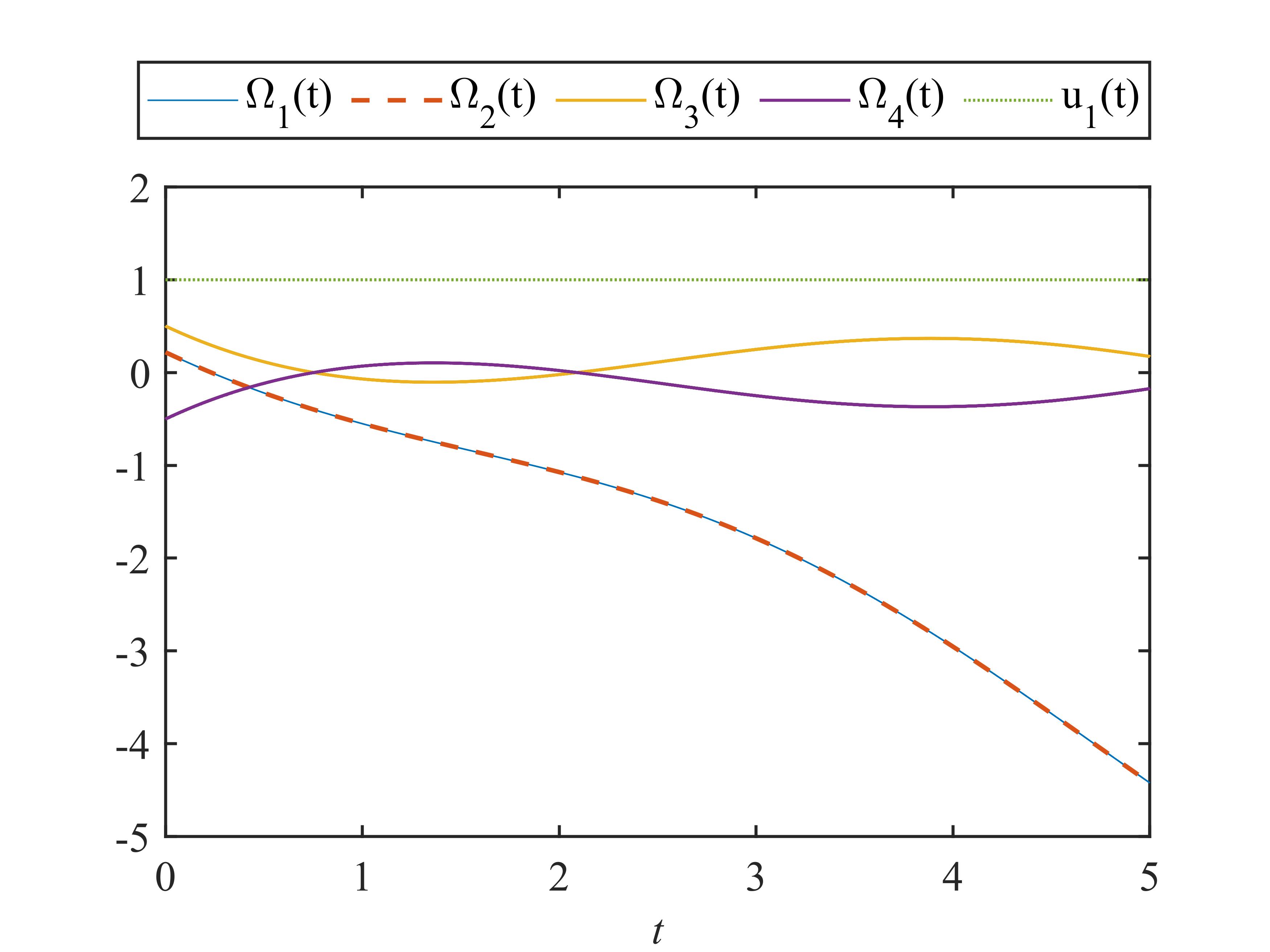}
}
\quad
\subfigure[]{
\includegraphics[width=6cm,height=4.5cm]{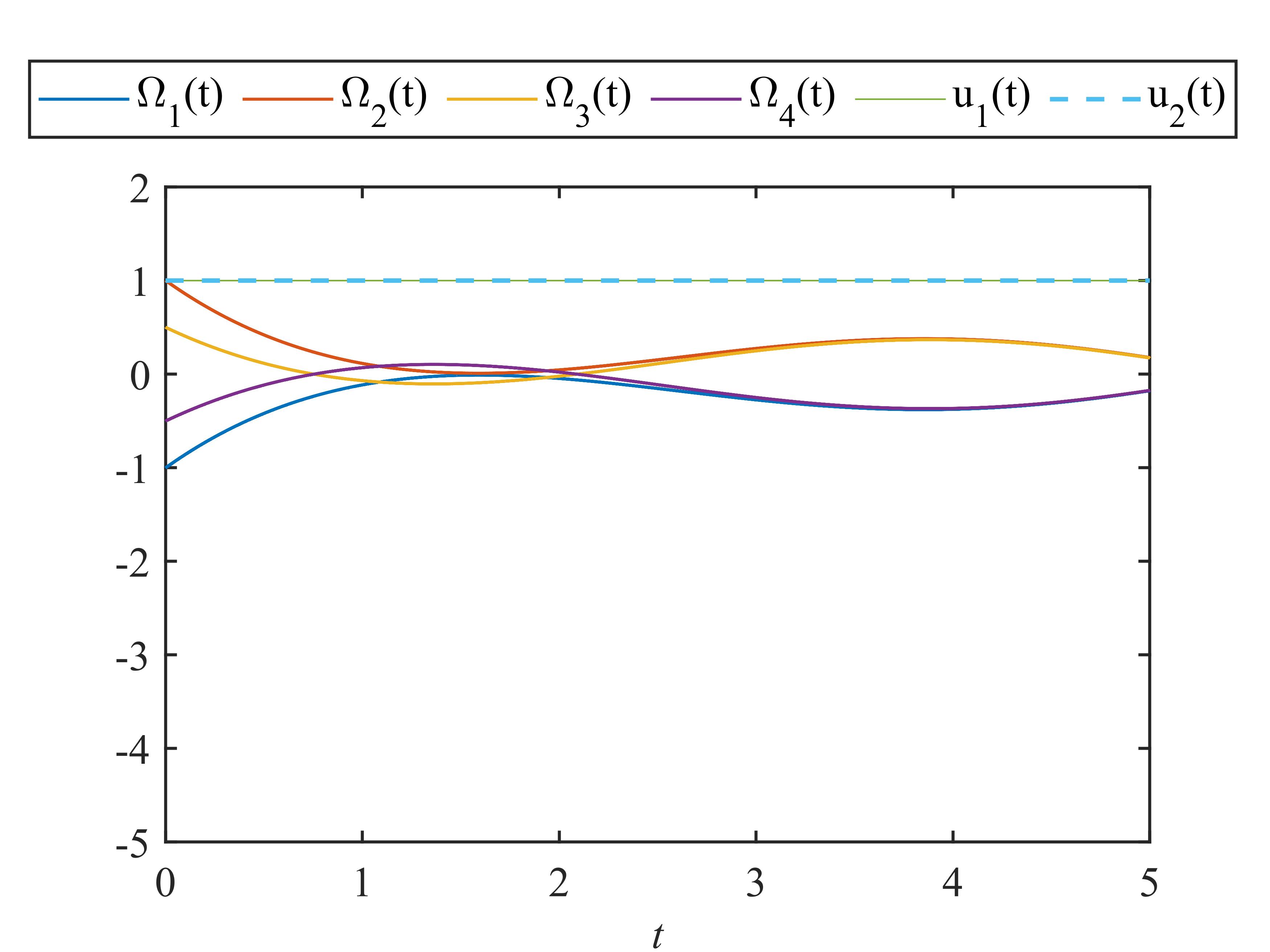}
}
\caption{Global Numerical Solution of Two Stage Drive
System}\label{fig:2sd}
\end{figure}

Further, we can reestablish an equivalent {\DAE} system of this {\IDAE} system with the angle as the variable, which can help us to obtain the exact solutions by symbolic computation.
\begin{eqnarray*}
  \Omega_{1}(t) &=& +\Omega_{2}(t)=-\frac{1}{2}\cdot\cos(t)+C_1\cdot t+C_2\\
  \Omega_{1}(t)&=& -\Omega_{2}(t)=-\frac{1}{4}\cdot(\sin(t)+\cos(t))+C_3\cdot\exp{(-t)}\\
   \Omega_{3}(t) &=& +\Omega_{4}(t)=-\frac{1}{2}\cdot\cos(t)+C_4\cdot t+C_5 \\
  \Omega_{3}(t)&=& -\Omega_{4}(t)=-\frac{1}{4}\cdot(\sin(t)+\cos(t))+C_6\cdot\exp{(-t)}
 \end{eqnarray*}

Here $C_1$, $C_2$, $C_3$, $C_4$, $C_5$, and $C_6$ are constants depending on consistent initial conditions. These exact solutions can be used to check the correctness of our global numerical solution of the embedding method.

It should be noted  that since the global numerical solution adopts the piecewise integration method, the constants $\bm{\xi}$ in the embedding method need to be reassigned along with the integration segment to ensure the consistency of the initial value and the correctness of the solution.

\section{Integro-Partial-Differential-Algebraic Equations}\label{sec:IPDAE}

In this section, we briefly generalize the $\Sigma$-method to {\IPDAE}s based on the modified signature matrix.


Wu, Reid and Ilie \cite{WU2009923} gave an approach for
the class of so-called $t$-dominated (see Definition \ref{de:t-dominated}) {\PDAE}. This approach can control the growth of differentiation due to only differentiation with respect to a single independent variable $t$. And it can avoid the expensive application of differential elimination methods which are poorly suited to systems with approximate coefficients. 

 The approach developed in this paper is an efficient way to identify the missing constraints for {\IDAE}s under certain conditions. Just like the promotion of {\DAE}s to {\IDAE}s,
we can now consider the class of $t$-dominated {\IPDAE}s 
which involve $D_t^{-1} = \int_{t_0}^t$, with leading $t$ structure dominated by $t$ derivatives and expressions 
in $D_t^{-1}$, which allows the direct application of the previous methods of our paper.

For simplicity, we only consider the case of $2$ independent variables $(t, u)$. 
The given system we assume to be 
written formally in terms of partial differential operators 
$ D_t = \frac{\partial}{\partial t} $ and $ D_u =\frac{\partial}{\partial u} $ so that it is a 
{\PDAE}.

Consider a set of indeterminates $\Omega=\{v_{\bm{\alpha}}^{j}|\bm{\alpha}=(\alpha_1,\alpha_2) \in \mathbb{N}^2,  j=1,\cdots, n\}$ where each member of $\Omega$ corresponds to a partial derivative by:
$v_{\bm\alpha}^{j}\leftrightarrow (D_t)^{\alpha_1}(D_u)^{\alpha_2}x_j(t,u)$.
In a similar manner to \cite{WU2009923}, we let
$\epsilon > 0$ be a positive symbolic parameter, and define a weight map $\tau:\Omega\rightarrow \R$ with respect to $t$ by
$$\tau(v_{\alpha}^{j}):=\left\{\begin{array}{ll}
    \alpha_{1} & ~~~~if~~\alpha_{2}=0,  \\
    \alpha_{1}+\epsilon & ~~~~otherwise.
\end{array}\right.$$

Similarly to {\IDAE} we can also split an {\IPDAE} $\bm{F}$ into two parts: $\bm{\Phi}$ without integral terms and $\bm{\Psi}$ with integral terms. Then Definition \ref{de:sm_idae} for the signature matrix of an  {\IDAE} can be generalized to {\IPDAE}.
\begin{define}[Signature Matrix for {\IPDAE}] \label{de:sm_ipdae}
  The $n \times n$ signature matrix $\bm \sigma(\bm{F})=[\sigma_{i,j}]_{1 \leq i \leq n,1 \leq j \leq n}(\bm{F})$ of {\IPDAE} $\bm{F}$ of Equation (\ref{eqn:IDAE}) is defined as:
\begin{equation*}
    [\sigma_{i,j}](\bm{F}):= \max_{i,j}{\left([\sigma_{i,j}](\bm{\Phi}),[\sigma_{i,j}](\bm{\Psi})\right)}
\end{equation*}
Here, the definition of $[\sigma_{i,j}](\bm{\Phi})$ and $[\sigma_{i,j}](\bm{\Psi})$ is in the way of Definition \ref{def:LD} and Definition \ref{de:sm_idae}. In which, $x_j(t)$ and $x_j(s)$ should be replaced by $x_j(t,u)$ and $x_j(s,u)$, respectively; the order of $\LD(\cdot,x_j(t))$ and the order of $\LD(\cdot,x_j(s))$ should be replaced by $\tau(\LD(\cdot,x_j(t,u)))$ and $\tau(\LD(\cdot,x_j(s,u)))$, respectively.
\end{define}

\begin{example}\label{ex:IPDAE}
    Let an {\IPDAE} $\bm{F}=\{\frac{\partial^2}{\partial{t}\partial{u}}{x_1(t,u)}+ \int_{t_0}^{t} (s-t)\cdot\frac{\partial^2}{\partial{t^2}}{x_1(t,u)} \cdot x_2(s,u) ds =0, \frac{\partial^2}{\partial{u^2}}{x_2(t,u)}+ \int_{t_0}^{t} (\frac{\partial}{\partial{s}}{x_1(s,u)})^2 ds =0\}$ with independent variables $(t, u)$ and dependent variables $x_1(t,u)$ and $x_2(t,u)$. 
    
    Thus, we can get the following two parts: $\bm{\Phi}=\{\frac{\partial^2}{\partial{t}\partial{u}}{x_1(t,u)},\frac{\partial^2}{\partial{u^2}}{x_2(t,u)}\}$, $\bm{\Psi}=\{ \int_{t_0}^{t} (s-t)\cdot\frac{\partial^2}{\partial{t^2}}{x_1(t,u)} \cdot x_2(s,u) ds, \int_{t_0}^{t} (\frac{\partial}{\partial{s}}{x_1(s,u)})^2 ds \}$

    By Definition \ref{def:LD} and Definition \ref{de:sm_idae}, the $2\times2$ signature matrix (with respect to t) of each parts are:
 $[\sigma_{i,j}](\bm{\Phi})=\left(\begin{array}{cc}
   1+\epsilon & -\infty\\
   -\infty & \epsilon
 \end{array}\right)
 $ and  $[\sigma_{i,j}](\bm{\Psi})=\left(\begin{array}{cc}
   \max(2,-\infty) & -2\\
   \epsilon-1 & 0
 \end{array}\right)
 $.
 
By Definition \ref{de:sm_ipdae}, the $2\times2$ signature matrix (with respect to t) of this {\IPDAE} is:
$[\sigma_{i,j}](\bm{F})=\left(\begin{array}{cc}
   2 & -2\\
   \epsilon-1 & \epsilon
 \end{array}\right)
 $.
    
\end{example}

As in \cite{WU2009923}, $t$-dominated {\PDAE} is dominated by pure derivatives in the independent variable $t$. And a pure derivative of dependent variable $x_j$ to the independent variable $t$ is a derivative form $(\frac{\partial}{\partial t})^{k} x_j(t,u)$ where $k\in \mathbb{N}$. 

\begin{define}\label{de:t-dominated}
We say an {\IPDAE} $\bm{F}$ is dominated by pure derivatives in the independent variable $t$ if there is no $\epsilon$ appearing in $[\sigma_{i,j}](\bm{F})$.
\end{define}

\begin{example}
    Consider the Example \ref{ex:IPDAE}, by Definition \ref{de:t-dominated}, we can say the first equation $F_1$ is t-dominated, and the second equation $F_2$ is not t-dominated since there is $\epsilon$ in its signature matrix.
\end{example}

For example, consider a curtain made of many pendula hanging under gravity $g$ given 
by Wu, Reid and Ilie \cite{WU2009923}.
As shown in Fig. \ref{fig:PendCurtain} the pendula are restricted to move on the surface of the cylinder and in planes
perpendicular to the $s$-axis displayed. The pendula form a continuous curtain in the limit. For
small deviations from the vertical equilibrium position the equations for $x(t, s), y(t, s)$ and Lagrange
multiplier $\lambda(t, s)$ for the continuous curtain satisfy
\begin{eqnarray*}
   \frac{\partial^{2}{x}}{\partial{t^{2}}} + \lambda x = \kappa \frac{\partial^{2}{x}}{\partial{s^{2}}}, \hskip6pt
    \frac{\partial^{2}{y}}{\partial{t^{2}}} + \lambda y + g = \kappa \frac{\partial^{2}{y}}{\partial{s^{2}}},  \hskip6pt 
  x^2 + x^2 = 1.
  \end{eqnarray*}

\begin{figure}
  \centering
  \includegraphics[height=3.5cm,width=8cm]{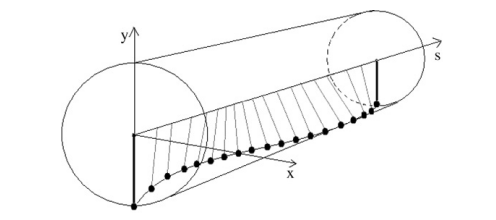}
   \caption{Pendulum Curtain}
  \label{fig:PendCurtain}
\end{figure}

Pishbin's paper \cite{Pishbin2015} contains interesting theoretical and computational discussions of {\DAE} and {\IAE}. In a manner to the pendulum example in 
\cite{Pishbin2015}, the
{\PDAE} for the pendulum curtain can be easily converted to an {\IPDAE} by integrating:
\begin{equation*}
\begin{split}
   \bm{F}=& \left\{  \frac{\partial{x}}{\partial{t}}- \left.{\frac{\partial{x}}{\partial{t}}}\right|_{t=t_0}+\int_{t_0}^{t}\lambda x dt = \int_{t_0}^{t}\kappa  \frac{\partial^2{x}}{\partial{s^2}}dt,  \right. \\
    &\frac{\partial{y}}{\partial{t}}-\left.{\frac{\partial{y}}{\partial{t}}}\right|_{t=t_0}+\int_{t_0}^{t}\lambda y dt  + g =\int_{t_0}^{t}\kappa  \frac{\partial^2{y}}{\partial{s^2}}dt, \\
   & \left. \int_{t_0}^{t} (x^2 + y^2 - 1) dt = 0.\right\}  
\end{split}
\end{equation*}

The key idea is to select an independent variable and ranking of derivatives in which the derivatives are highest in the ranking. In this example, the ranking for each dependent variables, {\eg} $x$, should satisfy $ x\prec \frac{\partial{x}}{\partial{s}} \prec \frac{\partial^{s}{x}}{\partial{s^2}}\prec \cdots\prec\frac{\partial{x}}{\partial{t}}\prec \frac{\partial^{2}{x}}{\partial{t}\partial{s}}\prec  \cdots$.  This system is $t$-dominated and a generalization of the $\Sigma$-method applies.

Finally, we can get its signature matrix of this {\IPDAE} as:
$$
 [\sigma_{i,j}](\bm{F})=\left(\begin{array}{ccc}
   ~1 & ~-\infty & ~-1\\
     ~-\infty & ~1 & ~-1\\
     -1 & -1 & ~-\infty
 \end{array}\right).
$$

By Definition \ref{de:t-dominated}, $\bm{F}$ is $t$-dominated.

In the manner to {\PDAE}s, we can apply the $\Sigma$-method to this $t$-dominated system directly, and get $\bm{c}=(1,1,3)$, $\bm{d}=(2,2,0)$. Then, we can easily yield the $t$-differentiation of this {\IPDAE}:
$\bm{F}^{(\bm c)}=\{\{
     \frac{\partial^{2}{x}}{\partial{t^{2}}} + \lambda x = \kappa \frac{\partial^{2}{x}}{\partial{s^{2}}},\hskip6pt
     \frac{\partial^{2}{y}}{\partial{t^{2}}} + \lambda y + g = \kappa \frac{\partial^{2}{y}}{\partial{s^{2}}},\hskip6pt x\frac{\partial^{2}{x}}{\partial{t^{2}}}+y\frac{\partial^{2}{y}}{\partial{t^{2}}}+(\frac{\partial{x}}{\partial{t}})^2+(\frac{\partial{y}}{\partial{t}})^2=0
\},\hskip6pt\{\frac{\partial{x}}{\partial{t}}- \left.{\frac{\partial{x}}{\partial{t}}}\right|_{t=t_0}+\int_{t_0}^{t}\lambda x dt = \int_{t_0}^{t}\kappa  \frac{\partial^2{x}}{\partial{s^2}}dt,\hskip6pt \frac{\partial{y}}{\partial{t}}-\left.{\frac{\partial{y}}{\partial{t}}}\right|_{t=t_0}+\int_{t_0}^{t}\lambda y dt  + g =\int_{t_0}^{t}\kappa  \frac{\partial^2{y}}{\partial{s^2}}dt,\hskip6pt x\frac{\partial{x}}{\partial{t}}+y\frac{\partial{y}}{\partial{t}} =0\},\hskip6pt\{x^2 + x^2 = 1\},\hskip6pt\{\int_{t_0}^{t} (x^2 + y^2 - 1) dt = 0\}\}
$.

Obviously, the Jacobian matrix of top block is 
$$\Jac=\left(\begin{array}{ccc}
   1 & 0 & x\\
    0 & 1 & y\\
     x & y & 0
 \end{array}\right),$$
 whose determinant is $x^2+y^2$ is non-singular satisfying its constraints. 

 Due to there is no integral item in the top block of the differentiation system $\bm{F}^{(\bm c)}$, the top block of $\bm{F}^{(\bm c)}$ is a typical {\PDAE}. By the Theorem $7.3$ of \cite{WU2009923}, since the Jacobian matrix of top block of $\bm{F}^{(\bm c)}$ is non-singular, the top block can be transformed into a {\DAE} system via the numerical method of lines \cite{WU2009923} and the Jacobian matrix of this {\DAE} is non-singular too. That's to say we can numerically solve this example after numerical discretization. 
 
However, this example is only of constant coefficient. Considering the length of this paper, numerical solution based on our approach in this paper for a general {\IPDAE} is our future work.

 \section{Conclusions}\label{sec:con}

There are many obstacles to the structural analysis by the $\Sigma$-method  for {\IDAE}s.To clear it, we redefine the signature matrix, so that it can deal with general forms of {\IDAE}s. 

However, the $\Sigma$-method may fail due to overestimating some elements in signature matrix including incorrect signature matrix cases and S-unamenable cases. To correct the signature matrix, we give an efficient detection method by points that also helps to calculate the rank of the Jacobian. To regularize S-unamenable {\IDAE}s, we remedy the condition of convergence and termination of the embedding method with a new definition of the {\DOF} for {\IDAE}s. The embedding method for an {\IDAE} avoids direct elimination by introducing new variables and equations to increase the dimensions of space in which the {\IDAE} resides. Under certain conditions, it avoids solving assignment problems for the new systems. The superiority of the embedding method has been illustrated with examples.

 For initial points, we can traverse all components by the Homotopy methods and interval methods. Combined with the embedding method, the global numerical method can find all numerical solutions of {\IDAE}s. We give an example of two stage drive system to demonstrate the method.

As shown in example of pendulum curtain, it is promising that modified signature matrix can help to transform an {\IPDAE} system into an {\IDAE} system. A general numerical solution method for {\IPDAE}s is our future work.

\appendix

\medskip\noindent{\bf \normalsize Acknowledgements.}
This work is partially supported by the projects of Chongqing (2020000036, 2021000263, cstc2020yszx-jcyjX0005, Chongqing Talents - Wenyuan Wu), and special research assistant program of CAS. 

\bibliographystyle{elsarticle-num-names}
\bibliography{cas-refs}





\end{document}